\allowdisplaybreaks \setlength{\textwidth}{6.5in}
\numberwithin{equation}{section}
\newtheorem{thm}{Theorem}[section]
\newtheorem{lem}[thm]{Lemma}
\theoremstyle{definition}
\newtheorem{defn}[thm]{Definition}
\theoremstyle{remark}
\newtheorem{rem}[thm]{Remark}
\title{Inverse Problems under Sarmanov dependence structure}
\date{}
\author{Krishanu Maulik}
\address{Krishanu Maulik, \ Indian Statistical Institute, \ No. 203, Barrackpore Trunk Road, Kolkata 700108 \\ \tt{Email address : krishanu@isical.ac.in}.}
\author{Moumanti Podder}
\address{Moumanti Podder, \ Courant Institute of Mathematical Sciences, \ New York University, \ 251 Mercer Street, New York, NY 10012 \\ \tt{Email address : mp3460@nyu.edu}.}
\begin{document}
\bibliographystyle{plainnat}

\begin{abstract}
Consider a sequence $\{(X_{i}, Y_{i})\}$ of independent and identically distributed random vectors, with joint distribution bivariate Sarmanov. This is a natural set-up for discrete time financial risk models with insurance risks. Of particular interest are the infinite time ruin probabilities $P\left[\sup_{n \geq 1}\sum_{i=1}^{n} X_i \prod_{j=1}^{i}Y_{j} > x\right]$. When the $Y_{i}$'s are assumed to have lighter tails than the $X_{i}$'s, we investigate sufficient conditions that ensure each $X_{i}$ has a regularly varying tail, given that the ruin probability is regularly varying. This is an inverse problem to the more traditional analysis of the ruin probabilities based on the tails of the $X_{i}$'s. We impose moment-conditions as well as non-vanishing Mellin transform assumptions on the $Y_{i}$'s in order to achieve the desired results. But our analysis departs from the more conventional assumption of independence between the sequences $\{X_{i}\}$ and $\{Y_{i}\}$, instead assuming each $(X_{i}, Y_{i})$ to be jointly distributed as bivariate Sarmanov, a fairly broad class of bivariate distributions. 
\end{abstract}

\subjclass[2010]{Primary60G70; Secondary62G32.}

\keywords{Regular variation, product of random variables, ruin probabilities, Sarmanov distribution, inverse problem, Mellin transform.}

\maketitle

\section{Introduction}
In this article we consider a discrete-time risk model with insurance and financial risks. We refer the reader to \cite{paulsen:2008} which describes the history and evolution of this model in detail. The survey discusses the pertinent integro-differential equations, asymptotics results and bounds on the ruin probability. It encompasses both continuous and discrete time risk model theories. The model we are concerned with constitutes the insurance risk $X_{n}$ and the financial risk or the stochastic discount factor $Y_{n}$ at time $n$. The stochastic discount value of aggregate net losses up to time $n$ in this set-up is given by 
\begin{equation} \label{discount value at n}
S_{n} = \sum_{i=1}^{n} X_{i} \prod_{j=1}^{i} Y_{j}.
\end{equation}
In this set-up the finite time ruin probabilities are defined as
\begin{equation} \label{finite time ruin}
\Psi(x, n) = P\left[\max_{1 \leq k \leq n} S_{k} > x\right],
\end{equation}
and the infinite time ruin probability is given by
\begin{equation} \label{infinite time ruin}
\Psi(x) = P\left[\sup_{n \geq 1} S_{n} > x\right].
\end{equation}
In general we derive the behaviour of $\Psi(x, n)$ and $\Psi(x)$ from the tail of $X_{1}$. However, the focus of this paper will be to study the inverse problem, i.e. the tail of $X_{1}$ given the behaviour of $\Psi(x, n)$ and $\Psi(x)$.

\par In the risk-model set-up described above, each $X_{i}$ is generally assumed to follow a regularly varying distribution. \cite{resnick:1987} has studied this class of distributions extensively, and its uses in applied probability are detailed in \cite{goldie:1987}. Its applications in stochastic recurrence equations have been studied in \cite{basrak:2002} and \cite{denisov:zwart:2007} among others. \cite{nyrhinen:2012} and \cite{yang:wang:2013} have investigated the myriad applications of regularly varying distributions in the above-mentioned risk-model problems. In particular, \cite{yang:wang:2013} considers a set up where the random vectors $(X_{n}, Y_{n})$ are i.i.d.\ jointly distributed as bivariate Sarmanov, as in Definition \ref{sarmanov definition}.

\par Recall that a function $f$ is said to be regularly varying at $\infty$ with index $\beta$ if $f(xy) \sim y^{\beta}f(x)$ as $x \rightarrow \infty$. Similarly, a random variable $X$ is said to have a regularly varying tail of index $-\alpha$, $\alpha > 0$, denoted by $X \in RV_{-\alpha}$, if its tail distribution $\overline{F}$ satisfies, for all $y > 0$, 
$$\overline{F}(xy) \sim y^{-\alpha} \overline{F}(x), \quad \text{as } x \rightarrow \infty.$$
Here and henceforth, for two positive functions $a(x)$ and $b(x)$, we write $a(x) \sim b(x)$ as $x \rightarrow \infty$ to mean that $\lim_{x \rightarrow \infty} a(x)/b(x) = 1$. When $X \in RV_{-\alpha}$ and $Y$ is independent of $X$, satisfying $E[Y^{\alpha + \epsilon}] < \infty$ for some $\epsilon >0$,  \cite{breiman:1965} shows that $XY$ will also be regularly varying with $P[XY > x] \sim E[Y^{\alpha}] P[X > x]$ as $x \rightarrow \infty$. The \emph{inverse problem} corresponding to this same set-up has been studied by \cite{jacobsen:2009}. When $XY$ is regularly varying, they propose sufficient conditions for $X$ to be regularly varying. \cite{resnick:willekens:1991} extended the product result of Breiman to finite and infinite sums. \cite{hazra:maulik:2012} explored the inverse problems corresonding to the finite and infinite sums under the Resnick-Willekens conditions. In Section \ref{sec:product} we are interested in the inverse problem for products, but with the classically studied independence structure between $X$ and $Y$ replaced by the bivariate Sarmanov distribution as in Definition \ref{sarmanov definition}. Yang and Wang obtained sufficient conditions for regular variation of $\Psi(x, n)$ and $\Psi(x)$ when each $X_{n}$ is regularly varying. \cite{maulik:podder:2016} extended their results for conditions similar to those proposed by \cite{denisov:zwart:2007}. In Sections \ref{sec:finite sum} and \ref{sec:infinite sum} we shall be interested in analyzing the inverse problem for sums -- finite and infinite cases respectively, imposing Breiman-like moment conditions on the $Y_{i}$'s. In section \ref{non-vanishing Mellin} we show the necessity of non-vanishing Mellin transform of the appropriate measure. We are motivated by the example given in \cite{jacobsen:2009} which we adapt appropriately for our set-up. In our study, each of the i.i.d.\ random vectors $(X_{i}, Y_{i})$ follows a bivariate Sarmanov distribution as in Definition \ref{sarmanov definition}. Given that $\Psi(x, n)$ or $\Psi(x) \in RV_{-\alpha}$, we shall investigate sufficient conditions to ensure that $X_{n} \in RV_{-\alpha}$ for each $n$, under the above mentioned dependence assumptions. In this context, we also refer to \cite{damek:2014} for discussions on inverse problems for regular variations in multivariate cases and where regular variation is not restricted to one direction or quadrant. They discuss inverse problems for the convolution of two multivariate random measures, assuming independence between them. They also focus on the inverse problems for weighted sums of multivariate regularly varying measures, but with the weights being non-random matrices. Our work in this paper departs both from their independence assumption as well as deterministic weights.

\par We give a brief description of the results from \cite{jacobsen:2009}, as our results are vitally based on these. Given a probability measure $\nu$ and a $\sigma$-finite measure $\rho$ on $(0, \infty)$, we define a new measure on $(0, \infty)$ by
$$(\nu \circledast \rho)(B) = \int_{0}^{\infty} \nu(x^{-1}B)\rho(dx), \quad B \text{ a Borel set on } (0, \infty).$$
Following Jacobsen et. al., we call this the \emph{multiplicative convolution} of $\nu$ and $\rho$, since in the case where $\nu$ and $\rho$ are probability measures, $\nu \circledast \rho$ gives the law of the product of two independent random variables with marginals $\nu$ and $\rho$. 

%A $\sigma$-finite measure $\rho$ is said to have the \emph{cancellation property} with respect to a family $\mathcal{N}$ of $\sigma$-finite measures on 
%$(0, \infty)$ if for any $\nu$ $\sigma$-finite on $(0, \infty)$ and $\overline{\nu} \in \mathcal{N}$, 
%$$\nu \circledast \rho = \overline{\nu} \circledast \rho \quad \implies \quad \nu = \overline{\nu}.$$ Restricting ourselves to what Jacobsen et. al. 
%considered, we will only deal with $\mathcal{N} = \{\nu_{\alpha}\}$ where $\alpha > 0$ and $\nu_{\alpha} \in RV_{-\alpha}$.

We now provide a paraphrased version of Theorem 2.3 of \cite{jacobsen:2009} which inspires our main result in Section \ref{sec:product}. 
\begin{thm} \label{jacobsen}
Let $\rho$ be a non-zero $\sigma$-finite measure such that, for some $\alpha > 0$,
\begin{equation} \label{moment condition}
\int_{0}^{\infty} y^{\alpha - \delta} \vee y^{\alpha + \delta}\rho(dy) < \infty, \quad \text{for some } \delta > 0.
\end{equation}
and the non-vanishing Mellin transform condition holds, i.e.
\begin{equation} \label{non-vanishing Mellin transform}
\int_{0}^{\infty} y^{\alpha + i \theta} \rho(dy) \neq 0 \quad \text{for all } \theta \in \mathbb{R}.
\end{equation}
%Suppose for some probability measure $\nu$ on $(0, \infty)$, the measure $\nu \circledast \rho \in RV_{-\alpha}$, and
%\begin{equation} \label{unif integ}
%\lim_{b \rightarrow 0} \limsup_{x \rightarrow \infty} \frac{\int_{0}^{\infty} \rho(x/y, \infty) \nu(dy)}{(\nu \circledast \rho)(x, \infty)} = 0.
%\end{equation}
If $\nu \circledast \rho \in RV_{-\alpha}$, then $\nu \in RV_{-\alpha}$ as well, with 
$$\lim_{x \rightarrow \infty} \frac{(\nu \circledast \rho)(x, \infty)}{\nu(x, \infty)} = \int_{0}^{\infty} y^{\alpha} \rho(dy).$$
\end{thm}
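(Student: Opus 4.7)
The plan is to convert the multiplicative convolution into an additive one on $\mathbb{R}$ via a logarithmic change of variables, and then exploit the non-vanishing Mellin transform through a Wiener--Tauberian argument, whose hypothesis \eqref{non-vanishing Mellin transform} has precisely this shape for a reason. Set $\overline{\nu}(x) = \nu(x,\infty)$, $V(x) = x^{\alpha}\overline{\nu}(x)$, and $W(x) = x^{\alpha}(\nu \circledast \rho)(x,\infty)$. A direct calculation from the definition of $\circledast$ yields
\begin{equation*}
W(x) \;=\; \int_0^{\infty} V(x/y)\, y^{\alpha}\rho(dy), \qquad x>0,
\end{equation*}
and the moment hypothesis \eqref{moment condition} ensures that $\rho^{\ast}(dy) = y^{\alpha}\rho(dy)$ is a finite (non-zero) measure on $(0,\infty)$. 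Writing $\nu \circledast \rho \in RV_{-\alpha}$ in terms of $W$ says exactly that $W$ is slowly varying at $\infty$; the goal is therefore to show that $V$ is slowly varying with limit $W(x)/\rho^{\ast}(0,\infty)$.

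First I would establish a priori bounds $0 < c \leq V(x) \leq C < \infty$ for $x$ large. The upper bound comes by splitting the convolution integral over $\{y \le M\}$ and $\{y > M\}$, using monotonicity of $\overline{\nu}$, slow variation of $W$, and the two-sided moment control $\int y^{\alpha \pm \delta}\rho(dy) < \infty$ to control the contributions of the tails; the lower bound is obtained similarly after choosing $M$ so that $\rho^{\ast}$ puts mass into a compact window. With these bounds the family $\{V(x\,\cdot)/V(x)\}_{x \geq x_{0}}$ is relatively compact in the locally uniform topology on $(0,\infty)$, so Helly selection extracts along any sequence $x_{n}\to\infty$ a limit $h : (0,\infty)\to(0,\infty)$.

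Evaluating the convolution identity at $x_{n}t$, dividing by $V(x_{n})$, and using the slow variation of $W$ (so that $W(x_{n}t)/W(x_{n}) \to 1$) gives, after passing to the limit under dominated convergence, the functional equation
\begin{equation*}
c^{\ast} \;=\; \int_0^{\infty} h(t/y)\, y^{\alpha}\rho(dy), \qquad t > 0,
\end{equation*}
for some positive constant $c^{\ast}$. Now substitute $s = \log t$, $u = \log y$, set $H(s) = h(e^{s})$, and let $\mu^{\ast}$ denote the pushforward of $\rho^{\ast}$ under $y \mapsto \log y$. The equation becomes $(H \ast \mu^{\ast})(s) \equiv c^{\ast}$ on $\mathbb{R}$, and the Fourier transform of $\mu^{\ast}$ is
\begin{equation*}
\widehat{\mu^{\ast}}(\theta) \;=\; \int_{\mathbb{R}} e^{i\theta u}\,\mu^{\ast}(du) \;=\; \int_0^{\infty} y^{\alpha + i\theta}\rho(dy),
\end{equation*}
which, by hypothesis \eqref{non-vanishing Mellin transform}, vanishes at no real $\theta$.

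The main obstacle is the final step: concluding from $H \ast \mu^{\ast} \equiv c^{\ast}$ and $H$ bounded that $H$ itself must be constant. This is the content of Wiener's Tauberian theorem --- the non-vanishing of $\widehat{\mu^{\ast}}$ is equivalent to the $L^{1}$-density of the translates of $\mu^{\ast}$, and under this density the only bounded measurable solutions of $(H-k)\ast \mu^{\ast} \equiv 0$ are $H \equiv k$ a.e. Applied here with $k = c^{\ast}/\mu^{\ast}(\mathbb{R})$, this forces $h$ to be constant on $(0,\infty)$ and, crucially, independent of the chosen subsequence. Hence the full limit $\ell = \lim_{x\to\infty} V(x)$ exists; dominated convergence (using the a priori bounds on $V$ and the moment condition) applied to the convolution identity then yields $\lim_{x\to\infty} W(x) = \ell \cdot \rho^{\ast}(0,\infty)$, so
\begin{equation*}
\lim_{x\to\infty}\frac{(\nu\circledast\rho)(x,\infty)}{\overline{\nu}(x)} \;=\; \int_0^{\infty} y^{\alpha}\rho(dy),
\end{equation*}
and $\nu \in RV_{-\alpha}$ as claimed.
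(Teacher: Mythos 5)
First, a point of context: the paper does not prove Theorem~\ref{jacobsen} at all --- it is stated explicitly as a paraphrase of Theorem~2.3 of \cite{jacobsen:2009} and used as a black box. So there is no ``paper's own proof'' to compare against; what you are attempting is a reconstruction of the argument from Jacobsen et al.\ itself. Your overall framework (logarithmic change of variables turning $\circledast$ into additive convolution, and Wiener's Tauberian theorem driven by the non-vanishing Mellin transform) is indeed the right tool and matches the spirit of the original.

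However, there is a genuine gap at the a priori bounds step, and it is load-bearing. What you actually need is $c\,W(x)\le V(x)\le C\,W(x)$ for large $x$ (not $c\le V(x)\le C$: $W$ is only slowly varying, so it may tend to $0$ or to $\infty$, and your closing line ``the full limit $\ell=\lim_{x\to\infty}V(x)$ exists'' is false in general for the same reason). The upper bound $V\lesssim W$ is easy: choose $[a,b]$ with $\rho^{*}[a,b]>0$ and use monotonicity of $\overline{\nu}$ in $W(x)\ge\int_{a}^{b}V(x/y)\,\rho^{*}(dy)$. But the lower bound is \emph{not} ``obtained similarly.'' Monotonicity of $\overline{\nu}$ gives $V(x/y)\ge y^{-\alpha}V(x)$ for $y\ge 1$, which is the wrong direction, and to run the symmetric argument you would need $V(x/y)\le C(y)\,V(x)$, i.e.\ $\overline{\nu}(x/y)/\overline{\nu}(x)$ bounded for $y\ge 1$ --- precisely the dominated-variation property of $\nu$ that you are trying to \emph{deduce}, not assume. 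Without the lower bound, the relative compactness of $\{V(x\cdot)/V(x)\}$ fails in one direction (no uniform upper bound for arguments $<1$), the dominated-convergence step toward the functional equation is unjustified for small $y$, and the constant $c^{*}=\lim_n W(x_n)/V(x_n)$ is not known to be finite and positive. This is the subtle heart of Jacobsen et al.'s Theorem~2.3, which they handle through a carefully structured cancellation lemma for $\sigma$-finite measures rather than a naive two-sided a priori estimate; your sketch compresses it into a sentence that, as written, does not hold.
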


\begin{rem}
In their original result, Jacobsen et al.\ allowed $\nu$ to be a $\sigma$-finite measure, for which they required an additional integrability assumption. It is not needed when $\nu$ is a probability measure.
\end{rem}

%Furthermore, if the left side of \eqref{non-vanishing Mellin transform} vanishes for some $\theta_{0} \in \mathbb{R}$, then for any real $a, b$ with $0 < %a^{2} + b^{2} \leq 1$, the $\sigma$-finite measure 
%\begin{equation} \label{counterexample}
%\nu(dx) = g(x) \nu_{\alpha}(dx)
%\end{equation}
%with 
%\begin{equation} \label{g}
%g(x) = 1 + a \cos (\theta_{0} \log x) + b \sin (\theta_{0} \log x), \quad x > 0,
%\end{equation}
%satisfies $\nu \circledast \rho = \nu_{\alpha} \circledast \rho$.
%\end{equation}
\par We relax the independence between $\nu$ and $\rho$ as considered by Jacobsen et al.\ in defining the product convolution $\circledast$. We extend our dependence structure to the much broader class of bivariate Sarmanov distributions, which is defined as follows.
\begin{defn} \label{sarmanov definition}
The pair of random variables $(X,Y)$ is said to follow a bivariate Sarmanov distribution, if
$$P(X \in dx, Y \in dy) = (1+\theta \phi_{1}(x)\phi_{2}(y))F(dx)G(dy), \quad x \in \mathbb{R}, y \geq 0,$$
where the kernels $\phi_{1}$ and $\phi_{2}$ are two real valued functions and the parameter $\theta$ is a real constant satisfying 
$$E\{\phi_{1}(X)\} = E\{\phi_{2}(Y)\} = 0$$
and 
$$1+\theta \phi_{1}(x)\phi_{2}(y) \geq 0, \quad x \in D_{X}, y \in D_{Y},$$ where $D_{X} \subset \mathbb{R}$ and $D_{Y} \subset \mathbb{R}^{+}$ are the supports of $X$ and $Y$, with marginals $F$ and $G$ respectively.
\end{defn}
\par This class of bivariate distributions is quite wide, covering a large number of well-known copulas such as the Farlie-Gumbel-Morgenstern (FGM) copula, which is recovered by taking $\phi_{1}(x) = 1 - 2F(x)$ and $\phi_{2}(y) = 1 - 2G(y)$. We refer the reader to \cite{lee:1996} for further discussion. A bivariate Sarmanov distribution is called proper if $\theta \neq 0$ and none of $\phi_{1}$ and $\phi_{2}$ vanishes identically.

\par As has been discussed above, Yang and Wang studied this class of distributions. They additionally assumed
\begin{equation} \label{limit of phi}
\lim_{x \rightarrow \infty} \phi_{1}(x) = d_{1}.
\end{equation}
Yang and Wang made the crucial observation that the bivariate Sarmanov dependence is not very far from independence. If $(X, Y)$ is bivariate Sarmanov, then asymptotically, the product $XY$ has the same tail distribution as the product $X Y^{*}_{\theta}$ where $X$ and $Y^{*}_{\theta}$ are independent and $Y^{*}_{\theta}$ is obtained through a change of measure performed on the distribution of $Y$. It has the distribution function $G_{\theta}$ with \begin{equation} \label{twisted version}
G_{\theta}(dy) = P[Y^{*}_{\theta} \in dy] = (1 + \theta d_{1} \phi_{2}(y))G(dy).
\end{equation}

To state the result formally, we first need to define the class of dominatedly-tail-varying distributions. A random variable $X$ with distribution function $F$ is called dominatedly-tail-varying, denoted by $X \in \mathcal{D}$ or $F \in \mathcal{D}$, if for all $0 < y < 1$, 
\begin{equation} \label{dominated tail}
\limsup_{x \rightarrow \infty}\frac{\overline{F}(xy)}{\overline{F}(x)} < \infty.
\end{equation} 

Lemma 3.1 of \cite{yang:wang:2013} shows the weak dependence of the bivariate Sarmanov distribution, but we shall need a less generalized version of it, stated as follows.
\begin{thm} \label{almost:independent}
Assume that $(X,Y)$ follows a bivariate Sarmanov distribution and \eqref{limit of phi} holds. Let $X^{*}$ and $Y^{*}$ be two independent random variables identically distributed as $X$ and $Y$ respectively, i.e. having marginals $F$ and $G$ respectively. Let $\overline{H^{*}}(x) = P[X^{*}Y^{*} > x]$. If now $H^{*} \in \mathcal{D}$ and $\overline{G}(x) = o(\overline{H^{*}}(x)),$ then 
\begin{equation} \label{eq: sarmanov}
P[X Y > x] \sim P[X^{*}Y^{*}_{\theta} > x],
\end{equation}
where $X^{*}, Y_{\theta}^{*}$ mutually independent and $Y_{\theta}^{*} \sim G_{\theta}$ as defined in \eqref{twisted version}.
\end{thm}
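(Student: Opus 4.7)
The plan is to exploit the factored form of the Sarmanov density and separate out the twisted-independent piece from a small residual. Starting from
\[
P[XY>x]=\int\int \mathbf{1}(uv>x)\bigl[1+\theta\phi_1(u)\phi_2(v)\bigr]F(du)G(dv),
\]
I would write $\phi_1(u)=d_1+(\phi_1(u)-d_1)$. The $d_1$-portion reassembles, via the definition \eqref{twisted version}, into exactly $P[X^*Y_\theta^*>x]$, leaving
\[
P[XY>x]-P[X^*Y_\theta^*>x]=\theta R(x),\qquad R(x):=\int\int\mathbf{1}(uv>x)(\phi_1(u)-d_1)\phi_2(v)F(du)G(dv).
\]
The core of the proof is then to show that $R(x)=o(\overline{H^*}(x))$.

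To estimate $R(x)$, I fix $\epsilon>0$ and use \eqref{limit of phi} to choose $M>1$ so that $|\phi_1(u)-d_1|<\epsilon$ whenever $u>M$. Since $v\geq 0$ and $x>0$ force only $u>0$ to contribute to $\{uv>x\}$, I split $R=R_{>M}+R_{\leq M}$ according to whether $u>M$ or $0<u\leq M$. On $\{u>M\}$, combining $|\phi_1-d_1|<\epsilon$ with the boundedness of $\phi_2$ (a standard feature of Sarmanov kernels, implicit in the non-negativity constraint on the density together with $E[\phi_2(Y)]=0$) gives $|R_{>M}(x)|\leq \epsilon\|\phi_2\|_\infty \overline{H^*}(x)$. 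On $\{0<u\leq M\}$, the condition $uv>x$ forces $v>x/M$, and bounding $|\phi_1-d_1|$ and $|\phi_2|$ by constants yields $|R_{\leq M}(x)|\leq C\,\overline{G}(x/M)$. By hypothesis $\overline{G}(y)=o(\overline{H^*}(y))$, while $H^*\in\mathcal{D}$ ensures $\overline{H^*}(x/M)=O(\overline{H^*}(x))$; together these give $\overline{G}(x/M)=o(\overline{H^*}(x))$. Sending $\epsilon\downarrow 0$ then delivers $R(x)=o(\overline{H^*}(x))$.

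To upgrade this difference estimate into the equivalence \eqref{eq: sarmanov}, I also need that $P[X^*Y_\theta^*>x]$ is of the same asymptotic order as $\overline{H^*}(x)$. The upper bound $P[X^*Y_\theta^*>x]\leq (1+|\theta d_1|\|\phi_2\|_\infty)\overline{H^*}(x)$ is immediate from the boundedness of the twist density $1+\theta d_1\phi_2$. The matching lower bound is the part I expect to be the main obstacle: it requires ruling out that the signed correction $\theta d_1\int \overline{F}(x/y)\phi_2(y)G(dy)$ cancels the leading $\overline{H^*}$ contribution, which should be precluded by combining the dominated-variation property $H^*\in\mathcal{D}$ with $E[\phi_2(Y)]=0$ (so that the signed piece is a bona fide lower-order correction rather than a cancellation). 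Once this comparability is in place, the bound $R(x)=o(\overline{H^*}(x))$ upgrades to $R(x)=o(P[X^*Y_\theta^*>x])$, and dividing through by $P[X^*Y_\theta^*>x]$ yields \eqref{eq: sarmanov}.
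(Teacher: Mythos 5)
The paper does not actually prove Theorem~\ref{almost:independent}; it is quoted directly from Lemma~3.1 of \cite{yang:wang:2013}. So your argument must stand on its own.

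Your decomposition $P[XY>x]-P[X^{*}Y_{\theta}^{*}>x]=\theta R(x)$ with
\[
R(x)=\int\!\!\int \mathbf{1}(uv>x)\,(\phi_{1}(u)-d_{1})\,\phi_{2}(v)\,F(du)\,G(dv)
\]
is correct, and so is the estimate $R(x)=o(\overline{H^{*}}(x))$: on $\{u>M\}$ you use $|\phi_{1}-d_{1}|<\epsilon$ and the boundedness of $\phi_{2}$ (Lemma~\ref{kernel:bounded}) to get $|R_{>M}|\le \epsilon b_{2}\,\overline{H^{*}}(x)$, and on $\{0<u\le M\}$ the event $\{uv>x\}$ forces $v>x/M$, so $|R_{\le M}|\le C\,\overline{G}(x/M)=o(\overline{H^{*}}(x))$ by the hypothesis $\overline{G}=o(\overline{H^{*}})$ together with $H^{*}\in\mathcal{D}$ applied at the fixed ratio $1/M$. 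Letting $\epsilon\downarrow 0$ gives $R(x)=o(\overline{H^{*}}(x))$.

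The genuine gap is exactly the step you flag at the end: you still owe the lower bound $\liminf_{x}P[X^{*}Y_{\theta}^{*}>x]/\overline{H^{*}}(x)>0$, without which $R(x)=o(\overline{H^{*}}(x))$ does not translate into \eqref{eq: sarmanov}. Your stated reason for expecting it, however, is wrong: the signed correction $\theta d_{1}\int\overline{F}(x/y)\phi_{2}(y)\,G(dy)$ is \emph{not} a lower-order term relative to $\overline{H^{*}}(x)$. Generically it is of the same order --- take, say, $G$ two-point with $1+\theta d_{1}\phi_{2}$ vanishing at one atom; then the correction is comparable to $\overline{H^{*}}(x)$, and only fails to cancel because of the pointwise non-negativity $1+\theta d_{1}\phi_{2}(y)\ge 0$ (obtained by sending $u\to\infty$ in the Sarmanov constraint $1+\theta\phi_{1}(u)\phi_{2}(y)\ge 0$), not because the integral against $\phi_{2}$ is intrinsically small. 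So $E[\phi_{2}(Y)]=0$ is not the mechanism. The actual lower-bound argument has to locate a fixed interval $[a_{1},a_{2}]$ carrying positive $G_{\theta}$-mass, giving $P[X^{*}Y_{\theta}^{*}>x]\ge c\,\overline{F}(x/a_{1})$, and then show $\overline{F}(x/a_{1})\gtrsim\overline{H^{*}}(x)$; this is precisely where $\overline{G}=o(\overline{H^{*}})$ (to exclude the regime where $\overline{H^{*}}$ is driven by the tail of $G$ rather than of $F$) and $H^{*}\in\mathcal{D}$ (for the required ratio control) must be used quantitatively, and that work is missing from the proposal.
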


Yang and Wang also noted the following property of bivariate Sarmanov which will also be important for establishing our results.
\begin{lem} \label{kernel:bounded}
Assume that $(X,Y)$ follows a proper bivariate Sarmanov distribution. Then there exists two positive constants $b_{1}$ and $b_{2}$ such that $|\phi_{1}(x)| \leq b_{1}$ for all $x \in D_{X}$ and $|\phi_{2}(y)| \leq b_{2}$ for all $y \in D_{Y}$.
\end{lem}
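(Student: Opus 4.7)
The plan is to exploit the pointwise non-negativity constraint $1 + \theta \phi_1(x)\phi_2(y) \geq 0$ together with the fact that, under properness, each kernel must take values of both signs on its support. Once we have produced points $y_+, y_- \in D_Y$ with $\phi_2(y_+) > 0 > \phi_2(y_-)$, substituting them into the constraint will yield a uniform upper bound and a uniform lower bound on $\phi_1(x)$ as $x$ ranges over $D_X$; an analogous step will handle $\phi_2$.

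The first step is therefore to argue that $\phi_2$ takes strictly positive and strictly negative values at points of $D_Y$ (and likewise for $\phi_1$ on $D_X$). Since the distribution is proper, $\phi_2$ does not vanish identically, while the zero-mean condition gives $\int \phi_2(y)\, G(dy) = 0$. If $\phi_2$ kept a single sign $G$-a.e., then being non-negative (or non-positive) with zero integral would force $\phi_2 = 0$ $G$-a.e., in which case the Sarmanov density would collapse to the product measure $F\otimes G$, contradicting properness. Hence there must exist $y_+,y_- \in D_Y$ as above.

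The second step is to evaluate $1 + \theta \phi_1(x)\phi_2(y_\pm) \geq 0$ at these two values and read off bounds on $\phi_1(x)$. A brief case split on the sign of $\theta$ shows that in either case one of $y_+,y_-$ produces an upper bound and the other a lower bound for $\phi_1(x)$, both independent of $x$. Setting
$$b_1 \;=\; \max\!\left\{\frac{1}{|\theta|\,\phi_2(y_+)},\ \frac{1}{|\theta|\,|\phi_2(y_-)|}\right\},$$
we obtain $|\phi_1(x)| \leq b_1$ for every $x \in D_X$. Swapping the roles of $\phi_1$ and $\phi_2$ and selecting $x_+, x_- \in D_X$ where $\phi_1$ takes opposite signs (justified symmetrically by properness of $\phi_1$ together with $E[\phi_1(X)] = 0$) then produces the analogous constant $b_2$.

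The only real subtlety, rather than a genuine obstacle, is the sign-alternation step: one must interpret properness as meaning that the kernels are not zero \emph{almost everywhere} with respect to their marginals, not merely that they are non-zero at some isolated point. Granting this natural reading, the zero-mean hypothesis forces each kernel to assume strictly positive as well as strictly negative values, and the rest of the argument is a routine manipulation of the defining inequality.
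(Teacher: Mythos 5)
Your proof is correct. The paper itself does not prove this lemma but cites it from Yang and Wang (2013), so there is no in-paper argument to compare against; your argument is the standard one for Sarmanov kernel boundedness. The two ingredients are exactly as you identify: (a) the zero-mean constraint $E[\phi_2(Y)]=0$ together with properness (read as ``$\phi_2$ is not $G$-a.e.\ zero'') forces $\{\phi_2>0\}$ and $\{\phi_2<0\}$ each to have positive $G$-measure, hence each meets $D_Y=\operatorname{supp}(G)$, giving $y_\pm\in D_Y$ with $\phi_2(y_+)>0>\phi_2(y_-)$; and (b) plugging these two points into $1+\theta\phi_1(x)\phi_2(y)\ge0$ yields a two-sided bound on $\phi_1(x)$ uniform in $x\in D_X$, with your $b_1$ being the correct constant in both sign cases for $\theta$. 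You also rightly flag the one interpretive subtlety: ``does not vanish identically'' in the definition of properness must be taken in the $G$-a.e.\ (resp.\ $F$-a.e.) sense, which is the only reading under which the lemma can be true and is the intended one. The symmetric argument gives $b_2$, so the proof is complete.
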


\section{Inverse problem for product} \label{sec:product}
We now state our result concerning the tail of one of the multiplicands from the regularly varying tail of the product of two random variables. 
\begin{thm} \label{inverse product}
Suppose the pair of random variables $(X, Y)$ jointly follow bivariate Sarmanov distribution, as defined in Definition \ref{sarmanov definition}, with $\lim_{x \rightarrow \infty} \phi_{1}(x) = d_{1}$. We also assume that $F \in \mathcal{D}$ and $\overline{G}(x) = o(\overline{F}(x))$. Suppose $XY \in RV_{-\alpha}$ for some $\alpha > 0$.  If now we have $E[Y^{\alpha + \epsilon}] < \infty$ for some $\epsilon > 0$ and for all $\beta \in \mathbb{R}$, 
\begin{equation} \label{non-vanishing Mellin twisted}
E[Y^{\alpha + i \beta}] + \theta d_{1} E[\phi_{2}(Y) Y^{\alpha + i \beta}] \neq 0,
\end{equation}
then $X \in RV_{-\alpha}$ and $P[XY > x] \sim \left\{E[Y^{\alpha}] + \theta d_{1} E[\phi_{2}(Y) Y^{\alpha}]\right\} P[X > x]$.
\end{thm}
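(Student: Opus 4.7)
The plan is to reduce to the independent setting via Theorem \ref{almost:independent}, and then invoke Theorem \ref{jacobsen}. To apply Theorem \ref{almost:independent} I must first verify its two distributional hypotheses, namely $H^{*} \in \mathcal{D}$ and $\overline{G}(x) = o(\overline{H^{*}}(x))$, starting from what we are given: $F \in \mathcal{D}$, $\overline{G}(x) = o(\overline{F}(x))$, and $E[Y^{\alpha+\epsilon}] < \infty$. Using that any distribution in $\mathcal{D}$ has finite upper Matuszewska index and that a Breiman-type estimate then produces $\overline{H^{*}}(x) \asymp \overline{F}(x)$ under the assumed moment bound on $Y$, both required properties follow; the lower bound $\overline{H^{*}}(x) \gtrsim \overline{F}(x)$ in particular lets $\overline{G}(x) = o(\overline{F}(x))$ translate into $\overline{G}(x) = o(\overline{H^{*}}(x))$. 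The conclusion of Theorem \ref{almost:independent} is then
\[
P[XY > x] \sim P[X^{*} Y^{*}_{\theta} > x],
\]
with $X^{*}, Y^{*}_{\theta}$ independent and $Y^{*}_{\theta} \sim G_{\theta}$ as in \eqref{twisted version}.

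Because $XY \in RV_{-\alpha}$ by hypothesis and regular variation is preserved under tail equivalence, the multiplicative convolution $(F \circledast G_{\theta})(x,\infty) = P[X^{*} Y^{*}_{\theta} > x]$ also lies in $RV_{-\alpha}$. I would now apply Theorem \ref{jacobsen} with $\nu = F$ and $\rho = G_{\theta}$. The Mellin non-vanishing condition \eqref{non-vanishing Mellin transform} for $G_{\theta}$ reads
\[
\int_{0}^{\infty} y^{\alpha+i\beta} G_{\theta}(dy) = E[Y^{\alpha+i\beta}] + \theta d_{1} E[\phi_{2}(Y) Y^{\alpha+i\beta}] \neq 0,
\]
which is precisely our assumption \eqref{non-vanishing Mellin twisted}. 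The moment condition \eqref{moment condition} on $G_{\theta}$ is straightforward: Lemma \ref{kernel:bounded} gives $|\phi_{2}| \leq b_{2}$, so $G_{\theta}$ has density bounded by $1 + |\theta| |d_{1}| b_{2}$ with respect to $G$, and picking $\delta < \min(\alpha, \epsilon)$ makes the integral near $0$ finite because $G_{\theta}$ is a bounded measure there, and the integral near $\infty$ finite by $E[Y^{\alpha+\epsilon}] < \infty$.

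Theorem \ref{jacobsen} now delivers $F \in RV_{-\alpha}$, i.e.\ $X \in RV_{-\alpha}$, together with
\[
P[X^{*} Y^{*}_{\theta} > x] \sim \biggl(\int_{0}^{\infty} y^{\alpha} G_{\theta}(dy)\biggr) \overline{F}(x) = \bigl\{E[Y^{\alpha}] + \theta d_{1} E[\phi_{2}(Y) Y^{\alpha}]\bigr\} P[X > x],
\]
and chaining with the Sarmanov-to-independent equivalence of the first step completes the proof. The principal obstacle I anticipate is the opening step: although the hypotheses $F \in \mathcal{D}$, $\overline{G}(x) = o(\overline{F}(x))$ and the moment bound on $Y$ feel evidently sufficient to extract $H^{*} \in \mathcal{D}$ and $\overline{G}(x) = o(\overline{H^{*}}(x))$, actually pinning this down requires a careful two-sided comparison between $\overline{F}$ and $\overline{H^{*}}(x) = E[\overline{F}(x/Y)]$, which is where most of the technical work of the theorem resides.
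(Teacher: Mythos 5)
Your overall strategy coincides with the paper's: verify the two hypotheses of Theorem~\ref{almost:independent}, use it to replace the Sarmanov pair $(X,Y)$ by the independent pair $(X^{*}, Y^{*}_{\theta})$ with $Y^{*}_{\theta}\sim G_{\theta}$, and then invoke Theorem~\ref{jacobsen} with $\nu = F$ and $\rho = G_{\theta}$; your verifications of the moment and Mellin conditions for $G_{\theta}$ via Lemma~\ref{kernel:bounded} also match the paper's.

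The one place you depart, and where a gap appears, is the verification that $H^{*}\in\mathcal{D}$. You propose to deduce it from a two-sided Breiman-type comparison $\overline{H^{*}}(x)\asymp\overline{F}(x)$. The lower half of that comparison is elementary and both you and the paper use it, but the upper half $\overline{H^{*}}(x)\lesssim\overline{F}(x)$ would require the moment exponent $\alpha+\epsilon$ to exceed the upper Matuszewska index of $F$, and this is not available a priori: $F\in RV_{-\alpha}$ is the \emph{conclusion} of the theorem, not a hypothesis, so an argument that implicitly ties the Matuszewska index of $F$ to $\alpha$ is circular. The paper sidesteps the issue entirely by citing Theorem~3.3 of \cite{cline:1994} for closure of $\mathcal{D}$ under the multiplicative convolution with $G$, with no tail comparison required. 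Relatedly, the ``careful two-sided comparison'' you flag at the end as the main technical obstacle is in fact not needed: the lower bound $\overline{H^{*}}(x)\geq\overline{G}(a)\,\overline{F}(x/a)$ combined with $F\in\mathcal{D}$ already yields $\overline{G}(x)=o(\overline{H^{*}}(x))$, and $H^{*}\in\mathcal{D}$ comes from Cline's closure result, not from comparing $\overline{H^{*}}$ with $\overline{F}$. Once you substitute that citation for the Breiman-type estimate, the rest of your argument is correct and is essentially the paper's proof.
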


\par We shall assume without loss of generality that $\epsilon \in (0, \alpha)$. 

%\begin{rem}
%To prove Theorem \ref{inverse product}, as well as in Sections \ref{sec:finite sum} and \ref{sec:infinite sum}, we shall require one of the conclusions of %Theorem 3.3 of \cite{cline:1994}. Suppose $F$ and $G$ are two probability distributions, both not degenerate at $0$. Let $X$ and $Y$ be mutually %independent with $X \sim F, \ Y \sim G$. Then
%\begin{equation} \label{dominated tail of product}
%X \in \mathcal{D} \quad \implies \quad XY \in \mathcal{D}.
%\end{equation}
%\end{rem}

\begin{proof} [Proof of Theorem \ref{inverse product}]
Let $X^{*}, Y^{*}$ be mutually independent copies of $X$ and $Y$, with marginals $F$ and $G$ respectively. We define $H^{*}$ by $\overline{H^{*}}(x) = P[X^{*} Y^{*} > x]$. Since $F \in \mathcal{D}$, from Theorem 3.3 of \cite{cline:1994}, we conclude that $H^{*} \in \mathcal{D}$ as well.
\par Choosing a suitable $a$ such that $\overline{G}(a) > 0$, we have
$$\overline{H^{*}}(x) \geq \overline{F}(x/a) \overline{G}(a).$$
Therefore, using $F \in \mathcal{D}$ and $\overline{G}(x) = o(\overline{F}(x))$, we have
$$\limsup_{x \rightarrow \infty} \frac{\overline{G}(x)}{\overline{H^{*}}(x)} \leq \limsup_{x \rightarrow \infty} \frac{\overline{G}(x)}{\overline{F}(x)} \limsup_{x \rightarrow \infty} \frac{\overline{F}(x)}{\overline{F}(x/a)} \limsup_{x \rightarrow \infty} \frac{\overline{F}(x/a)}{\overline{H^{*}}(x)} = 0.$$
Thus we have established that $\overline{G}(x) = o(\overline{H^{*}}(x))$.
%\begin{equation} \label{conditions for almost:independent}
%H^{*} \in \mathcal{D} \quad \text{and} \quad \overline{G}(x) = o(\overline{H^{*}}(x)).
%\end{equation}
Recall the twisted version $Y_{\theta}^{*}$ of $Y$ defined in \eqref{twisted version}. Then by Theorem \ref{almost:independent}, we know that $$P[X^{*}Y_{\theta}^{*} > x] \sim P[XY > x] \quad \implies \quad X^{*}Y_{\theta}^{*} \in RV_{-\alpha}.$$ 
Using Lemma \ref{kernel:bounded}, we get
$$E[{Y_{\theta}^{*}}^{\alpha + \epsilon}] \leq (1 + |\theta d_{1}| b_{2}) E[Y^{\alpha + \epsilon}] < \infty.$$
As defined in \eqref{twisted version}, if $G_{\theta}$ denotes the marginal of $Y_{\theta}^{*}$, then
\begin{align} 
\int_{0}^{\infty} y^{\alpha - \epsilon} \vee y^{\alpha + \epsilon} G_{\theta}(dy) =&  \int_{0}^{1} y^{\alpha - \epsilon} G_{\theta}(dy) + \int_{1}^{\infty} y^{\alpha + \epsilon} G_{\theta}(dy) \leq 1 + E[{Y_{\theta}^{*}}^{\alpha + \epsilon}] < \infty. \nonumber
\end{align}
By \eqref{non-vanishing Mellin twisted}, for all $\beta \in \mathbb{R}$, we have $\int_{0}^{\infty} y^{\alpha + i \beta} G_{\theta}(dy) \neq 0.$ We are now able to conclude, from Theorem \ref{jacobsen}, that $X^{*}$ and hence $X$ is in $RV_{-\alpha}$. The final result follows using \cite{breiman:1965}'s theorem.
%Using \cite{breiman:1965}'s theorem,
%$P[X^{*}Y_{\theta}^{*} > x] \sim E[{Y_{\theta}^{*}}^{\alpha}] P[X^{*} > x]$ which implies, using Theorem \ref{almost:independent},
%$$P[XY > x] \sim \left\{E[Y^{\alpha}] + \theta d_{1} E[\phi_{2}(Y) Y^{\alpha}]\right\} P[X > x].$$

\end{proof}

%\begin{rem}
%If we assume that $H^{*} \in \mathcal{D}$ and $\overline{G}(x) = o(\overline{H^{*}}(x))$, then the proof will go through without the need to invoke the %result from \cite{cline:1994}.
%\end{rem}

\section{Inverse problem for finite sum} \label{sec:finite sum}
We start with the same set-up as described in \cite{yang:wang:2013}. Let $\{(X_{i}, Y_{i})\}$ be a sequence of i.i.d.\ random vectors, with the generic vector $(X, Y)$ jointly having bivariate Sarmanov distribution, as in Definition \ref{sarmanov definition}. Recall that $\Psi(x, n)$ is the finite time ruin probability defined as $\Psi(x, n) = P\left[\displaystyle \max_{1 \leq k \leq n} S_{k} > x \right]$ where $S_{n}$ is as in \eqref{discount value at n}. We provide sufficient conditions under which $\Psi(x, n) \in RV_{-\alpha}$ implies $X \in RV_{-\alpha}$. To this end, we state and prove the following important lemma.

\begin{lem} \label{lemma 1 finite sum}
Let $\{(X_{i}, Y_{i})\}$ be a sequence of i.i.d.\ random vectors, with the generic vector $(X, Y)$ jointly bivariate Sarmanov distribution, as in Definition \ref{sarmanov definition}. We assume that $F \in \mathcal{D}$ and $\overline{G}(x) = o(\overline{F}(x))$. Denoting $P[XY > x]$ by $\overline{H}(x)$, we assume that for every $v > 0$, the quantity
\begin{equation} \label{sup for H}
\widetilde{H}(v) = \sup_{x > 0}\frac{\overline{H}(x/v)}{\overline{H}(x)}
\end{equation}
satisfies the condition 
\begin{equation} \label{unif integ finite sum}
\int_{0}^{\infty} \widetilde{H}(v) G(dv) < \infty.
\end{equation}
Then we can conclude that
\begin{equation} \label{eq: lemma 1 finite sum}
P[X_{1}Y_{1} > x, \ X_{2} Y_{2} Y_{1} > x] = o(P[X_{1} Y_{1} > x]).
\end{equation}
\end{lem}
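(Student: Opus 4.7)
The plan is to condition on the pair $(X_1, Y_1)$, use the independence between the two Sarmanov pairs to collapse the second event into the function $\overline{H}$, and then control the resulting single integral using the boundedness of the Sarmanov kernels (Lemma \ref{kernel:bounded}) together with the uniform-integrability hypothesis \eqref{unif integ finite sum}.

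First I would condition on $(X_1, Y_1)$. Since $(X_2, Y_2)$ is independent of $(X_1, Y_1)$ and has the same joint law, one has $P[X_2 Y_2 > y \mid (X_1, Y_1)] = \overline{H}(y)$, hence
$$P[X_1 Y_1 > x,\ X_2 Y_2 Y_1 > x] = E\bigl[\mathbf{1}\{X_1 Y_1 > x\}\,\overline{H}(x/Y_1)\bigr].$$
Expanding this expectation using the Sarmanov representation of the law of $(X_1, Y_1)$ splits the right-hand side as $I_1(x) + I_2(x)$, where
$$I_1(x) = \int_0^{\infty} \overline{F}(x/v)\,\overline{H}(x/v)\,G(dv),\qquad I_2(x) = \theta \int_0^{\infty} \phi_2(v)\,\overline{H}(x/v)\!\left(\int_{u > x/v}\!\!\phi_1(u)\,F(du)\right) G(dv).$$
By Lemma \ref{kernel:bounded}, $|\phi_1| \leq b_1$ and $|\phi_2| \leq b_2$, so $\bigl|\int_{u > x/v}\phi_1(u)F(du)\bigr| \leq b_1 \overline{F}(x/v)$, which yields $|I_2(x)| \leq |\theta| b_1 b_2\, I_1(x)$. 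The problem thus reduces to proving $I_1(x) = o(\overline{H}(x))$.

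For this last step, I would invoke \eqref{sup for H}, which gives $\overline{H}(x/v) \leq \widetilde{H}(v)\,\overline{H}(x)$ for every $x, v > 0$, and therefore
$$\frac{I_1(x)}{\overline{H}(x)} \leq \int_0^{\infty} \widetilde{H}(v)\,\overline{F}(x/v)\,G(dv).$$
The integrand is bounded above uniformly in $x$ by $\widetilde{H}(v)$, which is $G$-integrable by hypothesis \eqref{unif integ finite sum}, while for each fixed $v > 0$ the factor $\overline{F}(x/v)$ tends to $0$ as $x \to \infty$. Dominated convergence then gives $I_1(x)/\overline{H}(x) \to 0$, completing the argument.

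The main subtle point is the last dominated-convergence step, where the hypothesis \eqref{unif integ finite sum} is used in a crucial way to furnish an integrable envelope; the decomposition into $I_1$ and $I_2$, and the bounding of $I_2$ by $I_1$ via the Sarmanov kernels, is then routine bookkeeping. The standing assumptions $F \in \mathcal{D}$ and $\overline{G}(x) = o(\overline{F}(x))$ do not appear to enter directly into this particular lemma, but set the stage for applying Theorem \ref{almost:independent} in the ensuing development.
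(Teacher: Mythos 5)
Your proof is correct and follows essentially the same route as the paper: condition so as to pull out $\overline{H}(x/Y_1)$, use the Sarmanov density together with Lemma \ref{kernel:bounded} to reduce to bounding $\int_0^\infty \overline{F}(x/v)\overline{H}(x/v)\,G(dv)/\overline{H}(x)$, and finish by passing to the limit under the integral with $\widetilde{H}$ as the integrable envelope (the paper packages this last step as a Fatou argument rather than dominated convergence, which is an equivalent maneuver). Your closing observation that $F \in \mathcal{D}$ and $\overline{G} = o(\overline{F})$ are not used in this lemma but are needed downstream is also accurate.
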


%\begin{rem}
%We wish to show, for all $v > 0$, that 
%\begin{equation} \label{sup finite}
%\sup_{x > 0} \frac{\overline{H}(x/v)}{\overline{H}(x)} < \infty.
%\end{equation}
%When $0 < v < 1$, $\frac{\overline{H}(x/v)}{\overline{H}(x)} \leq 1$. Now suppose $X^{*}, Y^_{\theta}^{*}$ are mutually independent with $X^{*} \sim 
%F$ and $Y_{\theta}^{*} \sim G_{\theta}$ as defined in \eqref{twisted version}. Because $F \in \mathcal{D}$, using \eqref{dominated tail of product} we 
%conclude that $X^{*}Y_{\theta}^{*} \in \mathcal{D}$ as well. From Theorem \ref{almost:independent} we know that $\overline{H}(x) = P[XY > x] \sim %P[X^{*}Y_{\theta}^{*} > %x]$. Hence $XY \in \mathcal{D}$, which implies
%$$\limsup_{x \rightarrow \infty}
%\end{rem}

\begin{proof}
Conditioning on $Y_{1}$, and noting that $(X_{2}, Y_{2})$ independent of $(X_{1}, Y_{1})$, we get
\begin{equation} \label{breakdown 1}
\frac{P[X_{1}Y_{1} > x, \ X_{2} Y_{2} Y_{1} > x]}{P[X_{1} Y_{1} > x]} = \frac{\int_{0}^{\infty} P[X_{1} > x/v|Y_{1} = v] \overline{H}(x/v) G(dv)}{\overline{H}(x)}.
\end{equation}
Using the bivariate dependence structure between $X_{1}$ and $Y_{1}$, and Lemma \ref{kernel:bounded}, we have
\begin{equation} \label{bound on breakdown 1}
P[X_{1} > x/v|Y_{1} = v] \leq \int_{x/v}^{\infty} (1 + |\theta| |\phi_{1}(u)| |\phi_{2}(v)|) F(du) \leq (1 + |\theta| b_{1} b_{2}) \overline{F}(x/v).
\end{equation}
Observe that $\widetilde{H}(v) - \frac{\overline{F}(x/v) \overline{H}(x/v)}{\overline{H}(x)} \geq 0$, where $\widetilde{H}$ is as in \eqref{sup for H}. Applying Fatou's lemma, we now get
\begin{align} \label{after fatou}
\int_{0}^{\infty} \widetilde{H}(v) G(dv) - \limsup_{x \rightarrow \infty}\int_{0}^{\infty}\frac{\overline{F}(x/v) \overline{H}(x/v)}{\overline{H}(x)}G(dv) \geq & \int_{0}^{\infty} \left[\widetilde{H}(v) - \limsup_{x \rightarrow \infty} \frac{\overline{F}(x/v) \overline{H}(x/v)}{\overline{H}(x)}\right] G(dv) \nonumber\\
=& \int_{0}^{\infty} \widetilde{H}(v) G(dv),
\end{align}
where the last equality follows from the assumption of \eqref{unif integ finite sum}.
From \eqref{after fatou} we conclude that
\begin{equation} \label{lim sup 0}
\limsup_{x \rightarrow \infty}\int_{0}^{\infty}\frac{\overline{F}(x/v) \overline{H}(x/v)}{\overline{H}(x)}G(dv) = 0.
\end{equation}
Combining \eqref{breakdown 1}, \eqref{bound on breakdown 1} and \eqref{lim sup 0}, we get the desired result.
\end{proof}

The next lemma is stated under the same conditions as Lemma \ref{lemma 1 finite sum}. It additionally assumes Breiman-like moment condition on the random variables $Y_{i}$, and that the tail of the product $X_{1}Y_{1}$ is bounded above by a regularly varying function of index $-\alpha$. Under these additional assumptions, the next lemma shows negligibility of joinyt tails of higher products with respect to the general dominator, which is a regularly varying function.

\begin{lem} \label{lemma 2 finite sum}
Let $\{(X_{i}, Y_{i})\}$ be a sequence of i.i.d.\ random vectors, with the generic vector $(X, Y)$ jointly bivariate Sarmanov distribution, as in Definition \ref{sarmanov definition}, and satisfying $\lim_{x \rightarrow \infty}\phi_{1}(x) = d_{1}$. We assume that $F \in \mathcal{D}$ and $\overline{G}(x) = o(\overline{F}(x))$. Denoting $P[XY > x]$ by $\overline{H}(x)$, and $\widetilde{H}$ as in \eqref{sup for H}, we assume that the condition \eqref{unif integ finite sum} holds. Suppose $X$ is nonnegative and 
\begin{equation} \label{reg var bound}
P[XY > x] \leq W(x), \quad \text{for all } x > 0,
\end{equation}
where $W$ is a bounded regularly varying function with index $-\alpha$. Finally, we assume that $E[Y^{\alpha + \epsilon}] < \infty$ for some $\epsilon > 0$. Then, for $1 \leq s < t \leq n$, 
\begin{equation} \label{eq:lemma 2 finite sum}
P\left[X_{s}\prod_{i=1}^{s}Y_{i} > x, \ X_{t}\prod_{i=1}^{t}Y_{i} > x\right] = o(W(x)), \quad \text{as } x \rightarrow \infty.
\end{equation}

\end{lem}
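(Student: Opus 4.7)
The plan is to condition on $(Y_1, \ldots, Y_{t-1})$, so that $(X_t, Y_t)$ is independent of everything by the i.i.d.\ pair structure, and the conditional law of $X_s$ depends only on $Y_s$. Write $T = \prod_{i=1}^{s} Y_i$ and $M' = \prod_{i=1}^{t-1} Y_i$ (with $M' = T$ when $t = s+1$). Since $X_t Y_t \sim H$ independently of $(Y_1, \ldots, Y_{t-1})$, integrating out $(X_t, Y_t)$ first and $X_s$ second gives
\[
P\Bigl[X_s \prod_{i=1}^{s} Y_i > x,\ X_t \prod_{i=1}^{t} Y_i > x\Bigr] = E\bigl[P[X_s > x/T \mid Y_s]\, \overline{H}(x/M')\bigr].
\]
Applying Lemma~\ref{kernel:bounded} to bound $P[X_s > x/T \mid Y_s] \leq (1 + |\theta| b_1 b_2)\,\overline{F}(x/T)$ and the hypothesis $\overline{H} \leq W$, the task reduces to proving $E[\overline{F}(x/T)\, W(x/M')] = o(W(x))$.

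For each fixed $T, M' \in (0, \infty)$, pointwise as $x \to \infty$ the integrand divided by $W(x)$ tends to zero, since $\overline{F}(x/T) \to 0$ while the regular variation of $W$ with index $-\alpha$ forces $W(x/M')/W(x) \to (M')^{\alpha}$. To pass this limit under the expectation, I fix $0 < \delta < \min(\alpha, \epsilon)$ and invoke Potter's bounds: there exists $x_0$ such that for $x/M' \geq x_0$,
\[
\frac{W(x/M')}{W(x)} \leq (1+\eta)\bigl((M')^{\alpha-\delta} \vee (M')^{\alpha+\delta}\bigr),
\]
which combined with $\overline{F}(x/T) \leq 1$ gives an integrable dominating function on the event $\{M' \leq x/x_0\}$: the hypothesis $E[Y^{\alpha+\epsilon}] < \infty$ yields $E[(M')^{\alpha+\delta}] \leq (E[Y^{\alpha+\delta}])^{t-1} < \infty$, and since $M' \geq 0$ also $E[(M')^{\alpha-\delta}] \leq 1 + E[(M')^{\alpha+\delta}] < \infty$. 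Dominated convergence handles this regime. The residual contribution from $\{M' > x/x_0\}$ is at most $\|W\|_\infty\, P[M' > x/x_0]$, and Markov's inequality on the $(\alpha+\epsilon)$-moment of $M'$ gives $P[M' > x/x_0] = O(x^{-\alpha-\epsilon})$; applying Potter's bound to the slowly varying part of $W$ yields $x^{-\alpha-\epsilon} = o(W(x))$, so this boundary term is negligible.

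The main obstacle is orchestrating the two regimes for $M'$ cleanly: Potter's inequality plus dominated convergence handles the bulk, while the tail event $\{M' > x/x_0\}$ must be absorbed directly through the Breiman-type moment hypothesis. The Sarmanov dependence itself enters only through the uniform kernel bound from Lemma~\ref{kernel:bounded}, which is what lets the conditional probability factor split cleanly into the vanishing piece $\overline{F}(x/T)$ and the regularly varying piece $\overline{H}(x/M') \leq W(x/M')$.
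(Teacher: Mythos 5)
Your proof is correct, and it takes a genuinely different and simpler route than the paper's. The paper conditions on $(\Theta_s, \Theta_t) = \bigl(\prod_{j<s} Y_j,\ \prod_{j<t,\, j\neq s} Y_j\bigr)$, reduces via a $u>v$ versus $u\le v$ split to the single integral
\[
\int_0^\infty \frac{P[X_1 Y_1 > x/u,\ X_2 Y_2 Y_1 > x/u]}{P[X_1 Y_1 > x/u]} \cdot \frac{W(x/u)}{W(x)}\,(G_s + G_t)(du),
\]
and then invokes Lemma~\ref{lemma 1 finite sum} for the vanishing of the first ratio---and Lemma~\ref{lemma 1 finite sum} is precisely where the hypothesis \eqref{unif integ finite sum} on $\widetilde H$ is consumed, via a Fatou argument. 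By conditioning instead on the full vector $(Y_1,\dots,Y_{t-1})$ you obtain the cleaner factorization $E\bigl[P[X_s>x/T\mid Y_s]\,\overline H(x/M')\bigr]$ directly: Lemma~\ref{kernel:bounded} splits off the vanishing factor $\overline F(x/T)$ with no reference to $\widetilde H$, while Potter's bounds and $E[Y^{\alpha+\epsilon}]<\infty$ control the regularly varying factor $W(x/M')/W(x)$ in the bulk, and Markov plus the boundedness of $W$ kill the tail region. This bypasses Lemma~\ref{lemma 1 finite sum} and hypothesis \eqref{unif integ finite sum} entirely, which shows that the latter is actually redundant for Lemma~\ref{lemma 2 finite sum} given the assumed envelope $\overline H \le W$ with $W\in RV_{-\alpha}$ bounded. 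One small imprecision: Potter's inequality needs both $x\ge x_0$ and $x/M'\ge x_0$, so the bulk event should be written $\{x\ge x_0,\ M'\le x/x_0\}$; since you let $x\to\infty$, this is cosmetic.
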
 

\begin{proof}
Let $Z_{s} = X_{s} Y_{s}, \ Z_{t} = X_{t} Y_{t} Y_{s}, \ \Theta_{s} = \prod_{j=1}^{s-1}Y_{j}$ and $\Theta_{t} = \prod_{1 \leq j \leq t-1, j \neq s}Y_{j}$. Observe that $(Z_{s}, Z_{t})$ is independent of $(\Theta_{s}, \Theta_{t})$. Let $G_{s, t}$ be the joint distribution of $(\Theta_{s}, \Theta_{t})$, and $G_{s}, G_{t}$ the respective marginals. We condition on $(\Theta_{s}, \Theta_{t})$ to get
\begin{align} \label{bound lemma 2 finite sum}
\frac{P\left[X_{s}\prod_{i=1}^{s}Y_{i} > x, \ X_{t}\prod_{i=1}^{t}Y_{i} > x\right]}{W(x)}  = & \int_{0}^{\infty} \int_{0}^{\infty} \frac{P[Z_{s} > x/u, Z_{t} > x/v]}{W(x)}G_{s, t}(du, dv) \nonumber\\
\leq & \int_{u > v} \frac{P[Z_{s} > x/u, Z_{t} > x/u]}{W(x)}G_{s,t}(du, dv) \nonumber\\ &+ \int_{u \leq v} \frac{P[Z_{s} > x/v, Z_{t} > x/v]}{W(x)}G_{s, t}(du, dv) \nonumber\\
\leq & \int_{0}^{\infty} \frac{P[Z_{s} > x/u, Z_{t} > x/u]}{W(x)} (G_{s} + G_{t})(du) \nonumber\\
\leq & \int_{0}^{\infty} \frac{P[X_{1}Y_{1} > x/u, X_{2}Y_{2}Y_{1} > x/u]}{P[X_{1}Y_{1} > x/u]} \cdot \frac{W(x/u)}{W(x)} (G_{s} + G_{t})(du).
\end{align}

\par We observe that since $W \in RV_{-\alpha}$, using Potter's bounds from \cite{resnick:1987} we can choose a suitable $x_{0} > 0$ and $M > 0$  such that for all $x > x_{0}$, we have

%\[\frac{W(x/u)}{W(x)} \leq \left\{\begin{array}{ll} 
%Mu^{\alpha - \varepsilon} & \text{if $u < 1$}\\
%Mu^{\alpha + \varepsilon} &\text{if $1 \leq u \leq x/x_{0}$}
%\end{array}\right.\]

\begin{equation}
  \frac{W(x/u)}{W(x)} \leq \begin{cases}
    Mu^{\alpha - \varepsilon} & \text{if } u < 1 \\
    Mu^{\alpha + \varepsilon} & \text{if } 1 \leq u \leq x/x_{0}. 
  \end{cases} \label{Potter}
\end{equation}
For $u \leq x/x_{0}$, we bound the integrand of \eqref{bound lemma 2 finite sum}  by $M(1 + u^{\alpha+\epsilon})$. 

%Observe that
%$$\int_{0}^{x/x_{0}} M(1 + u^{\alpha + \epsilon}) (G_{s} + G_{t})(du) \leq 2M + M\left\{E[\Theta_{s}^{\alpha + \epsilon}] + E[\Theta_{t}^{\alpha + %\epsilon}]\right\} < \infty.$$
%From Lemma \ref{lemma 1 finite sum} and as $W \in RV_{-\alpha}$, we have 
%$$\lim_{x \rightarrow \infty} \frac{P[X_{1}Y_{1} > x/u, X_{2}Y_{2}Y_{1} > x/u]}{P[X_{1}Y_{1} > x/u]} \cdot \frac{W(x/u)}{W(x)} = 0,$$
%so that, 

Using Lemma \ref{lemma 1 finite sum} and applying dominated convergence theorem, we get
$$\lim_{x \rightarrow \infty} \int_{0}^{x/x_{0}} \frac{P[X_{1}Y_{1} > x/u, X_{2}Y_{2}Y_{1} > x/u]}{P[X_{1}Y_{1} > x/u]} \cdot \frac{W(x/u)}{W(x)} (G_{s} + G_{t})(du) = 0.$$

\par For $u > x/x_{0}$, the integral of \eqref{bound lemma 2 finite sum} is bounded above by
\begin{align}
\sup_{y > 0} W(y) \cdot \frac{P[\Theta_{t} > x/x_{0}] + P[\Theta_{s} > x/x_{0}]}{W(x)} \leq & \sup_{y > 0} W(y) \cdot \frac{x_{0}^{\alpha + \epsilon} \left\{E[\Theta_{t}^{\alpha + \epsilon}] + E[\Theta_{s}^{\alpha + \epsilon}]\right\}}{x^{\alpha + \epsilon} W(x)}\nonumber
\end{align}
which goes to zero as $x \rightarrow \infty$.
 
\end{proof}

\par With the aid of Lemmas \ref{lemma 1 finite sum} and \ref{lemma 2 finite sum}, we are now able to state and prove the main result of this section.
\begin{thm} \label{inverse finite sum}
Let $\{(X_{i}, Y_{i})\}$ be a sequence of i.i.d.\ random vectors, with the generic vector $(X, Y)$ jointly bivariate Sarmanov distribution, as in Definition \ref{sarmanov definition}, and satisfying $\lim_{x \rightarrow \infty}\phi_{1}(x) = d_{1}$. We assume that $F \in \mathcal{D}$ and $\overline{G}(x) = o(\overline{F}(x))$. Denoting $P[XY > x]$ by $\overline{H}(x)$, and $\widetilde{H}$ as in \eqref{sup for H}, we assume that the condition of \eqref{unif integ finite sum} holds. Suppose $X$ is nonnegative and $S_{n} \in RV_{-\alpha}$, where $S_{n}$ is as defined in \eqref{discount value at n}. We assume that $E[Y^{\alpha + \epsilon}] < \infty$ for some $\epsilon > 0$. For all $\beta \in \mathbb{R}$, 
\begin{equation} \label{product Mellin}
E[Y^{\alpha + i \beta}] + \theta d_{1} E[\phi_{2}(Y) Y^{\alpha + i \beta}] \neq 0,
\end{equation}
and
\begin{equation} \label{finite sum Mellin}
\sum_{k=0}^{n-1} \left\{E[Y^{\alpha + i \beta}]\right\}^{k} \neq 0.
\end{equation}
Then each $X_{i} \in RV_{-\alpha}$ and 
\begin{equation} \label{eq: inverse finite sum}
P[S_{n} > x] \sim \frac{\left(1 - E[Y^{\alpha}]^{n}\right) \left(E[Y^{\alpha}] + \theta d_{1} E[\phi_{2}(Y) Y^{\alpha}]\right)}{\left(1 - E[Y^{\alpha}]\right)} \overline{F}(x), \quad \text{as } x \rightarrow \infty.
\end{equation}
\end{thm}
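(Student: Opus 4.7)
The plan is to reduce the finite-sum inverse problem to the product inverse problem of Theorem \ref{inverse product}, by first invoking Jacobsen's Theorem \ref{jacobsen} to deduce $XY \in RV_{-\alpha}$ from $S_n \in RV_{-\alpha}$. Write $T_i = X_i \prod_{j=1}^{i} Y_j$ and note that, because $(X_i, Y_i)$ is independent of $\{Y_j : j \ne i\}$, the product $X_i Y_i$ is independent of $Y_1 \cdots Y_{i-1}$. Hence the law of $T_i$ factors as $\eta \circledast G^{\circledast (i-1)}$, where $\eta$ denotes the law of $XY$ and $G^{\circledast k}$ denotes the $k$-fold multiplicative convolution of $G$ with itself; consequently
\[ \sum_{i=1}^{n} P[T_i > x] \;=\; (\eta \circledast \mu_n)(x, \infty), \qquad \mu_n \;:=\; \sum_{i=0}^{n-1} G^{\circledast i}. \]
With this identity in hand, the argument splits into three steps: (i) establish the max-sum equivalence $P[S_n > x] \sim \sum_i P[T_i > x]$; (ii) apply Theorem \ref{jacobsen} with $\nu = \eta$ and $\rho = \mu_n$; (iii) finish by Theorem \ref{inverse product}.

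For step (i), which I expect to be the main obstacle, I would take $W(x) := P[S_n > x]$, which is bounded, belongs to $RV_{-\alpha}$, and dominates $P[XY > x] = P[T_1 > x]$ since $T_1 \le S_n$. All hypotheses of Lemma \ref{lemma 2 finite sum} then hold, yielding the joint-tail negligibility $P[T_s > x,\, T_t > x] = o(P[S_n > x])$ for $1 \le s < t \le n$. Bonferroni gives $\sum_i P[T_i > x] \le P[S_n > x] + o(P[S_n > x])$ at once. For the matching lower bound, I would use the inclusion
\[ \{S_n > x\} \;\subseteq\; \bigl\{\max_i T_i > (1-(n-1)\varepsilon) x\bigr\} \cup \bigcup_{s<t} \{T_s > \varepsilon x,\ T_t > \varepsilon x\}, \]
where the joint-tail terms are $o(P[S_n > x])$ by Lemma \ref{lemma 2 finite sum} (after rescaling $x \mapsto \varepsilon x$ and using $W \in RV_{-\alpha}$), then divide through by $P[S_n > (1-(n-1)\varepsilon)x] \sim (1-(n-1)\varepsilon)^{-\alpha} P[S_n > x]$ and let $\varepsilon \downarrow 0$. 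The subtlety here is that $\sum_i P[T_i > x]$ is not yet known to be regularly varying, so one must bootstrap off the regular variation of $P[S_n > \cdot]$ alone.

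For step (ii), $\mu_n$ is a nonzero finite measure and its Jacobsen hypotheses are routine: the moment condition \eqref{moment condition} reduces by independence of the $Y_j$'s to $\sum_{i=0}^{n-1} E[Y^{\alpha+\delta}]^i < \infty$ for $\delta \in (0,\epsilon)$ (the complementary integral over $(0,1]$ being trivially finite), while the non-vanishing Mellin condition becomes $\sum_{k=0}^{n-1} \bigl(E[Y^{\alpha + i \beta}]\bigr)^k \ne 0$, which is exactly \eqref{finite sum Mellin}. Theorem \ref{jacobsen} then gives $\eta \in RV_{-\alpha}$, i.e., $XY \in RV_{-\alpha}$, together with
\[ P[S_n > x] \;\sim\; \sum_{i=1}^{n} P[T_i > x] \;\sim\; \overline{H}(x) \cdot \frac{1 - E[Y^{\alpha}]^n}{1 - E[Y^{\alpha}]}. \]
For step (iii), every hypothesis of Theorem \ref{inverse product} is now in place --- bivariate Sarmanov with $\phi_1 \to d_1$, $F \in \mathcal{D}$, $\overline{G} = o(\overline{F})$, $XY \in RV_{-\alpha}$, $E[Y^{\alpha+\epsilon}] < \infty$, and the Mellin condition \eqref{product Mellin} matching \eqref{non-vanishing Mellin twisted} --- so applying it delivers $X \in RV_{-\alpha}$ with $\overline{H}(x) \sim \bigl(E[Y^{\alpha}] + \theta d_1 E[\phi_2(Y) Y^{\alpha}]\bigr)\overline{F}(x)$. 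Substituting this into the previous display produces exactly \eqref{eq: inverse finite sum}.
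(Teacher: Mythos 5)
Your proposal is correct and follows essentially the same route as the paper: establish the max--sum equivalence $P[S_n>x]\sim\sum_i P[T_i>x]$ via Lemma \ref{lemma 2 finite sum} (the paper cites Lemma 4.3 of \cite{hazra:maulik:2012} for the Bonferroni-type inequalities that you re-derive directly), then apply Theorem \ref{jacobsen} with $\nu=\eta$ and $\rho=\mu_n=\sum_{i=0}^{n-1}G^{\circledast i}$ to deduce $XY\in RV_{-\alpha}$, and finish with Theorem \ref{inverse product}. Your identification of $\rho$ with $\mu_n$ and the resulting Mellin/moment computations match the paper's measure $\rho(B)=\sum_{k=1}^{n}P[\prod_{j=1}^{k-1}Y_j\in B]$ exactly.
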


\begin{proof}
From Lemma 4.3 of \cite{hazra:maulik:2012}, we know that, for any $1/2 < \delta < 1$, 
\begin{equation} \label{lower bound}
P\left[S_{n} > x\right] \geq \sum_{i=1}^{n}P\left[X_{i}\prod_{j=1}^{i}Y_{j} > x\right] - \sum_{1 \leq s \neq t \leq n}P\left[X_{s}\prod_{j=1}^{s}Y_{j} > x, X_{t}\prod_{j=1}^{t}Y_{j} > x\right],
\end{equation}
and
\begin{equation} \label{upper bound}
P\left[S_{n} > x\right] \leq \sum_{i=1}^{n}P\left[X_{i}\prod_{j=1}^{i}Y_{j} > \delta x\right] + \sum_{1 \leq s \neq t \leq n}P\left[X_{s}\prod_{j=1}^{s}Y_{j} > \frac{1-\delta}{n-1}x, X_{t}\prod_{j=1}^{t}Y_{j} > \frac{1-\delta}{n-1}x\right].
\end{equation}
From \eqref{lower bound} and Lemma \ref{lemma 2 finite sum}, we have
\begin{equation} \label{lower bound limit}
\limsup_{x \rightarrow \infty} \frac{\sum_{i=1}^{n}P\left[X_{i}\prod_{j=1}^{i}Y_{j} > x\right]}{P[S_{n} > x]} \leq 1.
\end{equation}
From \eqref{upper bound} we have
\begin{multline} \label{liminf upper bound}
\liminf_{x \rightarrow \infty} \frac{\sum_{i=1}^{n}P\left[X_{i}\prod_{j=1}^{i}Y_{j} > x\right]}{P[S_{n} > x]} \geq \liminf_{x \rightarrow \infty} \frac{P\left[S_{n} > x\right]}{P\left[S_{n} > \delta x\right]} \\ - \sum_{1 \leq s \neq t \leq n} \limsup_{x \rightarrow \infty} \frac{P\left[X_{s}\prod_{j=1}^{s}Y_{j} > \frac{1-\delta}{n-1}x, \ X_{t}\prod_{j=1}^{t}Y_{j} > \frac{1-\delta}{n-1}x\right]}{P\left[S_{n} > \delta x\right]} 
\end{multline} 
and the right side of \eqref{liminf upper bound} equals $\delta^{\alpha}$, using the regular variation of $S_{n}$ and Lemma \ref{lemma 2 finite sum}. Hence letting $\delta \rightarrow 1$, we have
\begin{equation} \label{finite sum of tails tail of sum}
P[S_{n} > x] \sim \sum_{k=1}^{n} P[X_{k}\prod_{j=1}^{k}Y_{j} > x] \quad \text{as } x \rightarrow \infty.
\end{equation}
Let $\nu$ be the law induced by each $X_{i}Y_{i}$, and $\rho$ be the law given by
$$\rho(B) = \sum_{k=1}^{n} P\left[\prod_{j=1}^{k-1}Y_{j} \in B\right], \quad B \text{ a Borel set in } (0, \infty),$$
where the empty product is defined as $1$. Since $E[Y^{\alpha + \epsilon}] < \infty$, $\rho$ is a finite measure satisfying the moment condition \eqref{moment condition}. Due to \eqref{finite sum Mellin}, it also satisfies the non-vanishing Mellin transform condition \eqref{non-vanishing Mellin transform}. The multiplicative convolution $\nu \circledast \rho$ is the distribution on the right side of \eqref{finite sum of tails tail of sum}, hence in $RV_{-\alpha}$. From Theorem \ref{jacobsen} we conclude that $\nu$ and hence $X_{1}Y_{1} \in RV_{-\alpha}$. Finally we invoke Theorem~\ref{inverse product} to conclude that $X_{1} \in RV_{-\alpha}$.

\end{proof}

\section{Inverse problem for infinite sum} \label{sec:infinite sum}
We start with the same set-up as described in Section \ref{sec:finite sum}. We are now interested in sufficient conditions that ensure $X \in RV_{-\alpha}$ given that the infinite time ruin probability $\Psi(x) = P\left[\displaystyle \sup_{n \geq 1} S_{n} > x\right] \in RV_{-\alpha}$, where $S_{n}$ as in \eqref{discount value at n}. We shall state and prove two lemmas before the main result of this section. In this section, we shall additionally assume that $E\left[Y^{\alpha+\epsilon}\right] < 1$ for some $\epsilon \in (0, \alpha)$. This is required for finiteness of the geometric sum of the expectations. The next lemma shows the negligibility of the tail of the tail sums as well as the tail sum of the tails with respect to the dominator $W$.

\begin{lem} \label{lemma 1 infinite sum}
We start with the same set-up as in Lemma \ref{lemma 2 finite sum}, but additionally assume that $E[Y^{\alpha + \epsilon}] < 1$ for some $\epsilon \in (0, \alpha)$. Recall that $W \in RV_{-\alpha}$ is a bounded function with $P[XY > x] \leq W(x)$ for all $x > 0$. Then
\begin{equation} \label{eq1: lemma 1 infinite sum}
\lim_{m \rightarrow \infty} \limsup_{x \rightarrow \infty} \frac{P\left[\sum_{t=m+1}^{\infty} X_{t}\prod_{j=1}^{t}Y_{j} > x\right]}{W(x)} = 0,
\end{equation}
and
\begin{equation} \label{eq2: lemma 1 infinite sum}
\lim_{m \rightarrow \infty} \limsup_{x \rightarrow \infty} \frac{\sum_{t=m+1}^{\infty} P\left[X_{t}\prod_{j=1}^{t}Y_{j} > x\right]}{W(x)} = 0.
\end{equation}
\end{lem}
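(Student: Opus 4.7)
The plan is to reduce both \eqref{eq1: lemma 1 infinite sum} and \eqref{eq2: lemma 1 infinite sum} to a single working estimate
\begin{equation}\label{plan:key-bound}
\frac{E[W(y/T_t)]}{W(y)} \le M\bigl(E[Y^{\alpha-\epsilon}]^{t-1} + E[Y^{\alpha+\epsilon}]^{t-1}\bigr) + \frac{W^{*} x_0^{\alpha+\epsilon}\, E[Y^{\alpha+\epsilon}]^{t-1}}{y^{\alpha+\epsilon} W(y)},
\end{equation}
valid for $y$ sufficiently large, where $T_t := \prod_{j=1}^{t-1} Y_j$ and $W^{*} := \sup W$. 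First I would use independence of $T_t$ and $(X_t, Y_t)$ to write $P[X_t \prod_{j=1}^t Y_j > y] \leq E[W(y/T_t)]$ by conditioning, then split the expectation at $T_t = y/x_0$, using the Potter bound \eqref{Potter} in the symmetric form $W(y/u)/W(y) \leq M(u^{\alpha-\epsilon} + u^{\alpha+\epsilon})$ on the low range and the crude bound $W \leq W^{*}$ combined with Markov's inequality $P(T_t > y/x_0) \leq (x_0/y)^{\alpha+\epsilon} E[T_t^{\alpha+\epsilon}]$ on the complement. The identity $E[T_t^s] = E[Y^s]^{t-1}$ then produces \eqref{plan:key-bound}. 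A crucial preliminary is that $E[Y^{\alpha-\epsilon}] < 1$ as well (the hypothesis only asserts this for $\alpha+\epsilon$); I would obtain this from log-convexity of $s \mapsto \log E[Y^s]$ on $(0, \infty)$, which together with $\lim_{s \to 0^+} \log E[Y^s] = \log P(Y > 0) \leq 0$ and $\log E[Y^{\alpha+\epsilon}] < 0$ forces $\log E[Y^s] < 0$ strictly on $(0, \alpha+\epsilon]$. (When $P(Y > 0) = 0$ the lemma is trivial.)

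For \eqref{eq2: lemma 1 infinite sum} I would then sum \eqref{plan:key-bound} with $y = x$ over $t \geq m+1$. The first part is the tail of two convergent geometric series (both $E[Y^{\alpha \pm \epsilon}] \in (0,1)$) and so tends to $0$ uniformly in $x$ as $m \to \infty$; the second part is summed to at most a constant times $1/(x^{\alpha+\epsilon} W(x))$, which tends to $0$ as $x \to \infty$ since $W \in RV_{-\alpha}$ forces $x^{\alpha+\epsilon} W(x) \to \infty$.

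For \eqref{eq1: lemma 1 infinite sum} the extra ingredient is an Etemadi-type union bound. I would fix $\gamma \in (0,1)$ with $\gamma^{\alpha+\epsilon} > \max\{E[Y^{\alpha-\epsilon}], E[Y^{\alpha+\epsilon}]\}$ (possible since both sides are strictly less than $1$) and set $c_t := (1 - \gamma)\gamma^{t-m-1}$ for $t \geq m+1$, so $\sum_t c_t = 1$. Nonnegativity then gives $\{\sum_{t > m} X_t \prod_{j \leq t} Y_j > x\} \subset \bigcup_{t > m}\{X_t \prod_{j \leq t} Y_j > c_t x\}$, reducing the claim to bounding $\sum_t P[X_t \prod_{j \leq t} Y_j > c_t x]$. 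Applying \eqref{plan:key-bound} with $y = c_t x$, followed by the weight Potter bound $W(c_t x)/W(x) \leq M c_t^{-\alpha-\epsilon}$ (valid once $c_t x \geq x_0$), controls each term by $M^{2} c_t^{-\alpha-\epsilon}(E[Y^{\alpha-\epsilon}]^{t-1} + E[Y^{\alpha+\epsilon}]^{t-1})$ plus a remainder that vanishes in $x$ by regular variation. By choice of $\gamma$, both $\gamma^{-(\alpha+\epsilon)} E[Y^{\alpha \pm \epsilon}]$ lie in $(0,1)$, so the dominant series still sums geometrically and its tail vanishes as $m \to \infty$. The main obstacle throughout is the exponent $\alpha - \epsilon$ appearing in the $u < 1$ branch of \eqref{Potter}, which demands summability of $E[Y^{\alpha-\epsilon}]^{t-1}$ that is not directly in the hypothesis but is supplied by the log-convexity observation; calibrating $\gamma$ large enough that $\gamma^{\alpha+\epsilon}$ dominates both $E[Y^{\alpha \pm \epsilon}]$, so the Potter blow-up in the weight does not destroy the geometric decay from the $Y$-moments, is the secondary delicate point.
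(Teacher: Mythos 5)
Your argument for \eqref{eq2: lemma 1 infinite sum} is essentially the paper's: condition on $T_t=\prod_{j<t}Y_j$, split $E[W(x/T_t)]/W(x)$ at $T_t=x/x_0$, use Potter on the low range and $W\le W^*$ plus Markov on $T_t$ on the high range, and sum the resulting geometric series; the paper's bound \eqref{final bound subcritical} is your \eqref{plan:key-bound} at $y=x$. Your log-convexity digression is correct but unnecessary --- the paper simply applies Lyapunov's inequality $E[\Theta_t^{\alpha-\epsilon}]\le E[\Theta_t^{\alpha+\epsilon}]^{(\alpha-\epsilon)/(\alpha+\epsilon)}$, and the one-line estimate $E[Y^{\alpha-\epsilon}]\le E[Y^{\alpha+\epsilon}]^{(\alpha-\epsilon)/(\alpha+\epsilon)}<1$ already gives you what log-convexity gives.

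For \eqref{eq1: lemma 1 infinite sum} you take a genuinely different route. The paper uses the one-large-jump split
$P[\sum_{t>m}\cdots>x]\le\sum_{t>m}P[X_t\prod_j Y_j>x]+P[\sum_{t>m}X_t\prod_j Y_j\mathbf{1}_{[X_t\prod_j Y_j\le x]}>x]$,
estimates the truncated moments by Karamata's theorem (inequality \eqref{summand in second term}), and then bounds the second probability by Markov's inequality when $\alpha<1$ and by Markov plus Minkowski when $\alpha\ge1$. Your Etemadi-type inclusion $\{\sum_{t>m}X_t\prod_jY_j>x\}\subset\bigcup_{t>m}\{X_t\prod_jY_j>c_tx\}$ with $c_t=(1-\gamma)\gamma^{t-m-1}$ replaces all of this with a union bound; it avoids the $\alpha<1$ versus $\alpha\ge1$ case split and the Karamata/Minkowski machinery, at the cost of calibrating $\gamma$ so that $\gamma^{\alpha+\epsilon}>\max\{E[Y^{\alpha-\epsilon}],E[Y^{\alpha+\epsilon}]\}$, which is exactly the right condition. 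Both approaches rest on the same Potter estimate and the same geometric decay from $E[Y^{\alpha+\epsilon}]<1$.

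There is one genuine gap in the union-bound route as you wrote it: you invoke \eqref{plan:key-bound} at $y=c_tx$ and the weight bound $W(c_tx)/W(x)\le Mc_t^{-\alpha-\epsilon}$, both of which require $c_tx\ge x_0$; but since $c_t\to0$ geometrically, for every fixed $x$ there are infinitely many $t>m$ with $c_tx<x_0$, and the phrase ``a remainder that vanishes in $x$ by regular variation'' does not cover them (the crude bound $P[\cdot]\le1$ summed over those $t$ is divergent). The fix is simple but should be stated: apply Potter directly to the quotient $W(c_tx/v)/W(x)$, which only needs $x\ge x_0$ and $c_tx/v\ge x_0$, i.e.\ $v\le c_tx/x_0$, with no restriction on $c_tx$ itself. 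Splitting $E[W(c_tx/T_t)]/W(x)$ at $T_t=c_tx/x_0$ then gives, for all $x\ge x_0$ and \emph{all} $t>m$,
\begin{equation*}
\frac{E[W(c_tx/T_t)]}{W(x)}\le Mc_t^{-(\alpha+\epsilon)}\left(E[Y^{\alpha-\epsilon}]^{t-1}+E[Y^{\alpha+\epsilon}]^{t-1}\right)+\frac{W^*\,x_0^{\alpha+\epsilon}\,E[Y^{\alpha+\epsilon}]^{t-1}}{(c_tx)^{\alpha+\epsilon}W(x)},
\end{equation*}
and the rest of your argument goes through unchanged: the first block sums, by your choice of $\gamma$, to a convergent geometric series whose prefactor $\max\{E[Y^{\alpha-\epsilon}],E[Y^{\alpha+\epsilon}]\}^m$ tends to $0$ as $m\to\infty$, while the second block sums to $O(1/(x^{\alpha+\epsilon}W(x)))$, which vanishes as $x\to\infty$ for each fixed $m$ since $W\in RV_{-\alpha}$.
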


\begin{proof}
Using the notion of \emph{one large jump}, we split the numerator on the left side of \eqref{eq1: lemma 1 infinite sum} and bound it as follows.
\begin{equation} \label{split sum infinite lemma 1}
P\left[\sum_{t=m+1}^{\infty} X_{t}\prod_{j=1}^{t}Y_{j} > x\right] \leq \sum_{t=m+1}^{\infty}P\left[X_{t}\prod_{j=1}^{t}Y_{j} > x\right] + P\left[\sum_{t=m+1}^{\infty} X_{t}\prod_{j=1}^{t}Y_{j} \mathbf{1}_{\left[X_{t}\prod_{j=1}^{t}Y_{j} \leq x\right]} > x\right].
\end{equation}
If $G_{t}$ denotes the distribution of $\Theta_{t} = \prod_{j=1}^{t-1}Y_{j}$ then we have
\begin{equation} \label{bound first}
\sum_{t=m+1}^{\infty}\int_{0}^{\infty}W(x/u)G_{t}(du).
\end{equation}
For any $\gamma > \alpha$, from Karamata's theorem, we can find $M(\gamma) > 0$ such that
\begin{equation} \label{summand in second term}
E\left[\left\{X_{t}Y_{t}\right\}^{\gamma} \mathbf{1}_{[X_{t}Y_{t} \leq x]}\right] = \gamma \int_{0}^{x} u^{\gamma - 1} P[X_{t}Y_{t} > u] du \leq M(\gamma) W(x) x^{\gamma}.
\end{equation}
We now bound the second term on the right side of \eqref{split sum infinite lemma 1} separately for $\alpha < 1$ and $\alpha \geq 1$. For $\alpha < 1$, using Markov's inequality, and \eqref{summand in second term} for $\gamma = 1$, we have
\begin{align} \label{alpha less 1}
P\left[\sum_{t=m+1}^{\infty} X_{t}\prod_{j=1}^{t}Y_{j} \mathbf{1}_{\left[X_{t}\prod_{j=1}^{t}Y_{j} \leq x\right]} > x\right] \leq & \sum_{t=m+1}^{\infty} \int_{0}^{\infty} (x/v)^{-1} E\left[X_{t}Y_{t}\mathbf{1}_{[X_{t}Y_{t} \leq x/v]}\right] G_{t}(dv) \nonumber\\
\leq & M(1) \sum_{t=m+1}^{\infty} \int_{0}^{\infty} W(x/v) G_{t}(dv). 
\end{align}
For $\alpha \geq 1$, we use Markov's inequality, Minkowski's inequality, and \eqref{summand in second term} for $\gamma = \alpha + \epsilon$ to get the bound
\begin{equation} \label{alpha greater 1}
P\left[\sum_{t=m+1}^{\infty} X_{t}\prod_{j=1}^{t}Y_{j} \mathbf{1}_{\left[X_{t}\prod_{j=1}^{t}Y_{j} \leq x\right]} > x\right] \leq M(\alpha + \epsilon) \left\{\sum_{t=m+1}^{\infty} \left(\int_{0}^{\infty} W(x/v) G_{t}(dv)\right)^{\frac{1}{\alpha+\epsilon}}\right\}^{\alpha+\epsilon}.
\end{equation}
From \eqref{bound first}, \eqref{alpha less 1} and \eqref{alpha greater 1}, it suffices to show that
\begin{equation} \label{subcritical}
\lim_{m \rightarrow \infty} \limsup_{x \rightarrow \infty} \sum_{t=m+1}^{\infty} \int_{0}^{\infty} \frac{W(x/v)}{W(x)} G_{t}(dv) = 0, \quad \text{when } \alpha < 1,
\end{equation}
\begin{equation} \label{supercritical}
\lim_{m \rightarrow \infty} \limsup_{x \rightarrow \infty} \sum_{t=m+1}^{\infty} \left(\int_{0}^{\infty} \frac{W(x/v)}{W(x)} G_{t}(dv)\right)^{\frac{1}{\alpha+\epsilon}} = 0, \quad \text{when } \alpha \geq 1.
\end{equation}
We split the integral in \eqref{subcritical} over three intervals: $(0, 1], (1, x/x_{0}]$ and $(x/x_{0}, \infty)$, where $x_{0}$ is as in \eqref{Potter}. Then the integral over $(0, 1]$ is bounded by $M E\left[\Theta_{t}^{\alpha -\epsilon}\right]$, which is further bounded by $M \left\{E\left[Y^{\alpha + \epsilon}\right]^{\frac{\alpha - \epsilon}{\alpha + \epsilon}}\right\}^{t-1}$, using Potter's bounds (as in \eqref{Potter}) and Jensen's inequality. The integral over $(1, x/x_{0}]$ is bounded by $M \left\{E\left[Y^{\alpha + \epsilon}\right]\right\}^{t-1}$ again by Potter's bounds.
\par Because $W$ is bounded, the integral over $(x/x_{0}, \infty)$ is bounded as follows:
$$\int_{x/x_{0}}^{\infty}\frac{W(x/v)}{W(x)} G_{t}(dv) \leq \sup_{y > 0}W(y) \cdot \frac{P[\Theta_{t} > x/x_{0}]}{W(x)} \leq \sup_{y > 0}W(y) \cdot \frac{x_{0}^{\alpha+\epsilon} \left\{E\left[Y^{\alpha+\epsilon}\right]\right\}^{t-1}}{x^{\alpha+\epsilon} W(x)}.$$
Thus the final bound becomes, for a suitably large $M_{0}$,
\begin{equation} \label{final bound subcritical}
\int_{0}^{\infty} \frac{W(x/v)}{W(x)} G_{t}(dv) \leq M_{0}\left(\left\{E\left[Y^{\alpha + \epsilon}\right]\right\}^{\frac{(\alpha - \epsilon)(t-1)}{(\alpha + \epsilon)}} + \left\{E\left[Y^{\alpha + \epsilon}\right]\right\}^{t-1} + \frac{\left\{E\left[Y^{\alpha+\epsilon}\right]\right\}^{t-1}}{x^{\alpha+\epsilon} W(x)}\right).
\end{equation}
For $\alpha \geq 1$, from \eqref{final bound subcritical}, we get the bound 
\begin{equation} \label{final bound supercritical}
\left(\int_{0}^{\infty} \frac{W(x/v)}{W(x)} G_{t}(dv)\right)^{\frac{1}{\alpha+\epsilon}} \leq M_{0}^{\frac{1}{\alpha + \epsilon}} \left(\left\{E\left[Y^{\alpha + \epsilon}\right]\right\}^{\frac{(\alpha - \epsilon)(t-1)}{(\alpha + \epsilon)^{2}}} + \left\{E\left[Y^{\alpha + \epsilon}\right]\right\}^{\frac{t-1}{\alpha+\epsilon}} + \frac{\left\{E\left[Y^{\alpha+\epsilon}\right]\right\}^{\frac{t-1}{\alpha+\epsilon}}}{x W(x)^{\frac{1}{\alpha+\epsilon}}}\right).
\end{equation}
Because $W$ is bounded, the denominator $x W(x)^{\frac{1}{\alpha+\epsilon}} \rightarrow \infty$, and using the fact that $E[Y^{\alpha+\epsilon}] < 1$, we get the final desired results of \eqref{eq1: lemma 1 infinite sum} and \eqref{eq2: lemma 1 infinite sum}.
\end{proof}

\par We shall need one final lemma in order to show that, under the set-up described in Lemma \ref{lemma 1 infinite sum}, but now with $S = \sum_{i=1}^{\infty} X_{i} \prod_{j=1}^{i}Y_{j} \in RV_{-\alpha}$, the tail of $S$ is going to be asymptotically like the sum of the tails of the individual summands in $S$.

\begin{lem} \label{lemma 2 infinite sum}
Consider the exact same set-up as in Lemma \ref{lemma 1 infinite sum}, but now consider $W(x) = P[S > x]$ where $S = \sum_{i=1}^{\infty} X_{i} \prod_{j=1}^{i}Y_{j} \in RV_{-\alpha}$. Then 
\begin{equation} \label{tail of sum like sum of tail}
P[S > x] \sim \sum_{i=1}^{\infty} P\left[X_{i} \prod_{j=1}^{i} Y_{j} > x\right], \quad \text{as } x \rightarrow \infty.
\end{equation}
\end{lem}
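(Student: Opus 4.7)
The plan is to prove the two one-sided asymptotic inequalities
\[
\limsup_{x \to \infty} \frac{\tau_\infty(x)}{P[S > x]} \leq 1 \qquad \text{and} \qquad \liminf_{x \to \infty} \frac{\tau_\infty(x)}{P[S > x]} \geq 1,
\]
where $\tau_\infty(x) := \sum_{i=1}^{\infty} P\left[X_i \prod_{j=1}^i Y_j > x\right]$, by truncating the infinite sum at a finite level $m$, handling the bulk $S_m = \sum_{i=1}^m X_i \prod_{j=1}^i Y_j$ via the finite-sum bounds \eqref{lower bound}--\eqref{upper bound}, and controlling the tail $A_m = S - S_m$ via Lemma \ref{lemma 1 infinite sum}. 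Throughout, I would set $W(x) = P[S > x]$, which by hypothesis is bounded and lies in $RV_{-\alpha}$; since $X_1 Y_1 \leq S$, the requirement $P[XY > x] \leq W(x)$ is automatic, so Lemmas \ref{lemma 2 finite sum} and \ref{lemma 1 infinite sum} are both available with this choice.

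For the $\limsup$ direction, nonnegativity of the summands gives $P[S_m > x] \leq P[S > x]$, and the lower bound \eqref{lower bound} applied to $S_m$, combined with Lemma \ref{lemma 2 finite sum} controlling the double-tail contributions, yields $\sum_{i=1}^m P[X_i \prod_{j=1}^i Y_j > x] \leq P[S > x] + o(P[S > x])$ for each fixed $m$. Splitting $\tau_\infty = \sum_{i=1}^m + \sum_{i > m}$ and invoking \eqref{eq2: lemma 1 infinite sum} to send the tail contribution to zero as $m \to \infty$ then gives the desired inequality.

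For the $\liminf$ direction I would use the crude inclusion $\{S > x\} \subseteq \{S_m > (1-\varepsilon)x\} \cup \{A_m > \varepsilon x\}$ and apply the upper bound \eqref{upper bound} to $P[S_m > (1-\varepsilon)x]$ with a parameter $\delta \in (1/2, 1)$. Lemma \ref{lemma 2 finite sum} once more converts the double-tail contributions into $o(P[S > x])$, and monotonicity gives $\sum_{i=1}^m P[X_i \prod_{j=1}^i Y_j > \delta(1-\varepsilon)x] \leq \tau_\infty(\delta(1-\varepsilon)x)$, producing
\begin{equation*}
P[S > x] \leq \tau_\infty(\delta(1-\varepsilon)x) + o(P[S > x]) + P[A_m > \varepsilon x].
\end{equation*}
Dividing by $P[S > x]$ and using the $RV_{-\alpha}$ property of $W$, which turns $P[S > x]/P[S > \delta(1-\varepsilon)x]$ into $(\delta(1-\varepsilon))^\alpha(1+o(1))$, I would then take $\liminf_x$, next send $m \to \infty$ (where the factorization $P[A_m > \varepsilon x]/P[S > x] = (P[A_m > \varepsilon x]/W(\varepsilon x))\cdot(W(\varepsilon x)/W(x))$ combined with \eqref{eq1: lemma 1 infinite sum} forces the $A_m$-term to zero), and finally let $\delta \uparrow 1$ and $\varepsilon \downarrow 0$ to conclude.

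The main obstacle I expect is the careful coordination of the four limits $x \to \infty$, $m \to \infty$, $\delta \uparrow 1$, $\varepsilon \downarrow 0$: the scaling factor $(\delta(1-\varepsilon))^{-\alpha}$ created by regular variation blows up as $\varepsilon \downarrow 0$, so the $A_m$-term must be annihilated via $m \to \infty$ \emph{before} $\varepsilon$ is sent to zero, and similarly the finite-sum double tails must be controlled before $m$ grows. Verifying that $W = P[S > \cdot]$ is admissible in both helper lemmas is immediate from $X_1 Y_1 \leq S$ but worth stating at the outset.
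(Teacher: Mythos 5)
Your proposal is correct and follows essentially the same route as the paper: it combines the Hazra--Maulik bounds \eqref{lower bound}--\eqref{upper bound} with Lemma~\ref{lemma 2 finite sum} (double-tail negligibility) and Lemma~\ref{lemma 1 infinite sum} (infinite-tail negligibility), uses the regular variation of $S$ to absorb the scaling constants, and carefully iterates the limits $x \to \infty$, then $m \to \infty$, then $\delta \uparrow 1$. The only cosmetic difference is that the paper first records the intermediate equivalence $P[S_n > x] \sim P[S > x]$ (in the iterated-limit sense) via the decomposition $P[S > (1+\delta)x] \leq P[S_n > x] + P[A_n > \delta x]$, whereas you fold that step directly into the comparison with $\tau_\infty$ using the equivalent rescaled inclusion $\{S > x\} \subseteq \{S_m > (1-\varepsilon)x\} \cup \{A_m > \varepsilon x\}$.
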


\begin{proof}
From Lemma \ref{lemma 1 infinite sum}, for all $n \in \mathbb{N}$, we get
\begin{equation} \label{limit 1 infinite sum lemma 2}
\lim_{n \rightarrow \infty} \limsup_{x \rightarrow \infty} \frac{P\left[\sum_{t=n+1}^{\infty} X_{t} \prod_{j=1}^{t}Y_{j} > x\right]}{P[S > x]} = 0,
\end{equation}
\begin{equation} \label{limit 2 infinite sum lemma 2}
\lim_{n \rightarrow \infty} \limsup_{x \rightarrow \infty} \sum_{t=n+1}^{\infty} \frac{P\left[X_{t} \prod_{j=1}^{t}Y_{j} > x\right]}{P[S > x]} = 0,
\end{equation}
and from Lemma \ref{lemma 2 finite sum}, for all $s \neq t$, we have
\begin{equation} \label{limit 3 infinite sum lemma 2}
\lim_{x \rightarrow 0} \frac{P\left[X_{s} \prod_{j=1}^{s}Y_{j} > x, \ X_{t} \prod_{j=1}^{t}Y_{j} > x\right]}{P[S > x]} = 0.
\end{equation}
Recall $S_{n}$ as defined in \eqref{discount value at n}. For any $\delta > 0$, 
\begin{equation} \label{split sum lemma 2 infinite sum}
P[S > (1+\delta)x] \leq P[S_{n} > x] + P\left[\sum_{t=n+1}^{\infty} X_{t} \prod_{j=1}^{t} Y_{j} > \delta x\right].
\end{equation}
From \eqref{limit 1 infinite sum lemma 2} and because $S \in RV_{-\alpha}$, we get
 
$$\lim_{n \rightarrow \infty} \liminf_{x \rightarrow \infty}\frac{P[S_{n} > x]}{P[S > x]} \geq \liminf_{x \rightarrow \infty} \frac{P[S > (1+\delta)x]}{P[S > x]} - \lim_{n \rightarrow \infty} \limsup_{x \rightarrow \infty}\frac{P\left[\sum_{t=n+1}^{\infty} X_{t} \prod_{j=1}^{t} Y_{j} > \delta x\right]}{P[S > x]} = (1+\delta)^{-\alpha},$$
so that by letting $\delta \rightarrow 0$, we get the lower bound 
\begin{equation} \label{liminf lower bound}
\lim_{n \rightarrow \infty} \liminf_{x \rightarrow \infty}\frac{P[S_{n} > x]}{P[S > x]} \geq 1.
\end{equation}
But we trivially also have $P[S_{n} > x] \leq P[S > x]$, hence we conclude
\begin{equation} \label{limsup upper bound}
\lim_{n \rightarrow \infty} \limsup_{x \rightarrow \infty}\frac{P[S_{n} > x]}{P[S > x]} \leq 1.
\end{equation}
We invoke the inequalities from Lemma 4.3 of \cite{hazra:maulik:2012}, as in Lemma \ref{inverse finite sum}, and consider \eqref{lower bound} and \eqref{upper bound}. From \eqref{lower bound}, \eqref{limsup upper bound} and \eqref{limit 3 infinite sum lemma 2}, we have
$$\lim_{n \rightarrow \infty} \limsup_{x \rightarrow \infty} \frac{\sum_{t=1}^{n} P\left[X_{t}\prod_{j=1}^{t}Y_{j} > x\right]}{P[S > x]} \leq 1.$$
Finally, from \eqref{upper bound}, \eqref{liminf lower bound} and again \eqref{limit 3 infinite sum lemma 2}, and using the regular variation of $S$, we get
$$\lim_{n \rightarrow \infty} \liminf_{x \rightarrow \infty} \frac{\sum_{t=1}^{n} P\left[X_{t}\prod_{j=1}^{t}Y_{j} > x\right]}{P[S > x]} \geq 1.$$
Combining these with \eqref{limit 2 infinite sum lemma 2} we get the final result.
\end{proof}

We finally come to the main result of this section, which infers about the tail behavior of each $X_{i}$ from the regularly varying tail of $S = \sum_{i=1}^{\infty} X_{i} \prod_{j=1}^{i} Y_{j}$. 

\begin{thm} \label{inverse infinite sum}
Consider the exact same set-up as in Lemma \ref{lemma 2 infinite sum}. Additionally, we assume that for all $\beta \in \mathbb{R}$, \eqref{product Mellin} holds and 
\begin{equation} \label{infinite sum Mellin}
\sum_{k=0}^{\infty} \left\{E\left[Y^{\alpha + i \beta}\right]\right\}^{k} \neq 0.
\end{equation}
Then we conclude that each $X_{i} \in RV_{-\alpha}$ and 
\begin{equation} \label{eq: inverse infinite sum}
P[S > x] \sim \frac{E[Y^{\alpha}] + \theta d_{1} E[\phi_{2}(Y) Y^{\alpha}]}{1-E[Y^{\alpha}]} \overline{F}(x) \quad \text{as } x \rightarrow \infty.
\end{equation}
\end{thm}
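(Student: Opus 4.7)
The plan is to mirror the strategy used in the proof of Theorem \ref{inverse finite sum}, with Lemma \ref{lemma 2 infinite sum} taking over the role that the Hazra--Maulik sandwich inequalities played in the finite case. Lemma \ref{lemma 2 infinite sum} already delivers the key asymptotic
$$P[S > x] \sim \sum_{i=1}^\infty P\left[X_i \prod_{j=1}^i Y_j > x\right] \quad \text{as } x \to \infty,$$
which is the only nontrivial analytic input. The right-hand side can be recognized as the tail of a multiplicative convolution: I would let $\nu$ denote the law of $X_1 Y_1$ and define
$$\rho(B) = \sum_{k=1}^\infty P\left[\prod_{j=1}^{k-1} Y_j \in B\right], \quad B \text{ a Borel set in } (0,\infty),$$
with the empty product interpreted as $1$, so that the infinite sum above equals $(\nu \circledast \rho)(x,\infty)$. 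Consequently $\nu \circledast \rho \in RV_{-\alpha}$.

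Next one has to verify the hypotheses of Theorem \ref{jacobsen} for $\rho$. For any $\delta \in (0, \epsilon)$, Lyapunov's inequality together with the assumption $E[Y^{\alpha+\epsilon}] < 1$ yields $E[Y^{\alpha-\delta}], E[Y^{\alpha+\delta}] < 1$, whence
$$\int_0^\infty (y^{\alpha-\delta} \vee y^{\alpha+\delta}) \rho(dy) \leq \sum_{k=1}^\infty \bigl(E[Y^{\alpha-\delta}]^{k-1} + E[Y^{\alpha+\delta}]^{k-1}\bigr) < \infty,$$
so the moment condition \eqref{moment condition} holds. The non-vanishing Mellin transform of $\rho$ evaluated at $\alpha + i\beta$ is exactly $\sum_{k=0}^\infty E[Y^{\alpha+i\beta}]^k$, which is nonzero by hypothesis \eqref{infinite sum Mellin}.

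Theorem \ref{jacobsen} therefore applies and gives $\nu \in RV_{-\alpha}$, i.e., $X_1 Y_1 \in RV_{-\alpha}$, along with
$$P[S > x] \sim (\nu \circledast \rho)(x,\infty) \sim \left(\int_0^\infty y^\alpha \rho(dy)\right) P[X_1 Y_1 > x] = \frac{P[X_1 Y_1 > x]}{1 - E[Y^\alpha]}.$$
With $X_1 Y_1 \in RV_{-\alpha}$ now in hand, Theorem \ref{inverse product} (applied via hypothesis \eqref{product Mellin}) gives both $X_1 \in RV_{-\alpha}$ and $P[X_1 Y_1 > x] \sim \bigl(E[Y^\alpha] + \theta d_1 E[\phi_2(Y) Y^\alpha]\bigr)\overline{F}(x)$; substituting this into the display above produces \eqref{eq: inverse infinite sum}. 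The only real obstacle is the routine verification that $\rho$ is a finite measure with the required two-sided moment integrability, and this is precisely where the strengthened assumption $E[Y^{\alpha+\epsilon}] < 1$ (rather than mere finiteness of the moment) is essential, as it is what makes the geometric series that bounds the moments of the partial products of $Y_j$'s summable.
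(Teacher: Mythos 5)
Your proposal is correct and takes the same route as the paper: the paper's own proof of Theorem \ref{inverse infinite sum} is a one-line remark saying it mirrors the proof of Theorem \ref{inverse finite sum} with $\rho$ redefined as the infinite sum $\rho(B) = \sum_{k=1}^{\infty} P[\prod_{j=1}^{k-1}Y_{j} \in B]$. You have simply carried out the details that the paper leaves implicit -- Lemma \ref{lemma 2 infinite sum} in place of the Hazra--Maulik sandwich, the verification (via Lyapunov and the geometric series driven by $E[Y^{\alpha+\epsilon}]<1$) that $\rho$ satisfies \eqref{moment condition} and \eqref{non-vanishing Mellin transform}, Theorem \ref{jacobsen} to get $X_1Y_1 \in RV_{-\alpha}$ with constant $(1-E[Y^{\alpha}])^{-1}$, and finally Theorem \ref{inverse product} -- so the argument matches the intended proof in substance as well as structure.
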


\begin{proof}
The proof is similar to that of Theorem \ref{inverse finite sum} with $\rho$ defined as
$$\rho(B) = \sum_{k=1}^{\infty} P\left[\prod_{j=1}^{k-1}Y_{j} \in B\right], \ B \text{ a Borel set in } (0, \infty).$$
%From the moment condition $E[Y^{\alpha + \epsilon}] < 1$, we conclude that $\rho$ is $\sigma$-finite. Observe that $\nu \circledast \rho$ is the %distribution 
%$$\sum_{i=1}^{\infty} P\left[X_{i} \prod_{j=1}^{i} Y_{j} > x\right],$$ 
%and from Lemma \ref{lemma 2 infinite sum} and because $S \in RV_{-\alpha}$, we get $\nu \circledast \rho \in RV_{-\alpha}$ as well. The moment %condition \eqref{moment condition} holds because of $E[Y^{\alpha + \epsilon}] < 1$. The non-vanishing Mellin transform condition of \eqref{non-vanishing %Mellin transform} holds because of \eqref{infinite sum Mellin}. Thus applying Theorem \ref{jacobsen} we have each $X_{i}Y_{i} \in RV_{-\alpha}$ and $P[S %> x] \sim \frac{1}{1-E[Y^{\alpha}]} P[X_{i}Y_{i} > x]$. Since \eqref{product Mellin} holds, we have, applying Theorem \ref{inverse product}, that each %$X_{i} \in RV_{-\alpha}$, and \eqref{eq: inverse infinite sum} holds.
\end{proof}

\section{Necessity of the non-vanishing Mellin transform condition} \label{non-vanishing Mellin}
Each of Theorems \ref{inverse product}, \ref{inverse finite sum} and \ref{inverse infinite sum} has non-vanishing Mellin transform condition(s) imposed on the sequence $\{Y_{i}\}$ of random variables, similar to the condition \eqref{non-vanishing Mellin transform} in Theorem \ref{jacobsen}. In this section we shall show that such a condition cannot be relaxed for proving our results. This is similar to the assertion made by \cite{jacobsen:2009} in Theorem 2.3. They show that if \eqref{non-vanishing Mellin transform} does not hold for some $\beta$, then a $\sigma$-finite measure $\nu$ without a regularly varying tail can be found such that $\nu \circledast \rho$ is regularly varying. The construction of the counterexample in Theorem \ref{counterexample} is inspired by Jacobsen et al. 
\par To this end, recall the class of dominatedly tail varying distributions given in \eqref{dominated tail}. We start with the following useful remark.
\begin{rem} \label{reg var bounded}
From \cite{foss:korshunov:zachary}, we know that, if $F$ and $G$ are distribution functions with $G \in RV_{-\alpha}$ and for some constants $0 < c_{1} < c_{2} < \infty$,
$$c_{1}\overline{G}(x) \leq \overline{F}(x) \leq c_{2}\overline{G}(x) \quad \text{for all sufficiently large } x,$$
then $F \in \mathcal{D}$.
\end{rem} 

\begin{thm} \label{counterexample}
Let $G$ be a distribution function on $(0, \infty)$. We are given two bounded functions $\phi_{1}$ and $\phi_{2}$ on $(0, \infty)$, and $\theta \in \mathbb{R}$ such that:
\begin{enumerate}
\item $\phi_{1}$ takes both positive and negative values,
\item $\lim_{x \rightarrow \infty} \phi_{1}(x) = d_{1} \in \mathbb{R}$ exists,
\item for all $x > 0, y > 0$ we have $1 + \theta \phi_{1}(x) \phi_{2}(y) \geq 0$,
\item $\int_{0}^{\infty} \phi_{2}(y) G(dy) = 0$.
\end{enumerate}
For some $\alpha > 0, \epsilon > 0$ and $\beta_{0} \in \mathbb{R}$, assume that $\int_{0}^{\infty} y^{\alpha + \epsilon} G(dy) < \infty$ and 
\begin{equation} \label{vanishing Mellin}
\int_{0}^{\infty} y^{\alpha + i \beta_{0}} G(dy) + \theta d_{1} \int_{0}^{\infty} \phi_{2}(y) y^{\alpha + i \beta_{0}} G(dy) = 0.
\end{equation}
If $Y \sim G$, then there exists $X$, with not regularly varying tail, such that $(X, Y)$ is jointly distributed as bivariate Sarmanov, as defined in Definition \ref{sarmanov definition}, with kernel functions $\phi_{1}, \phi_{2}$, and constant $\theta$, with $XY$ having regularly varying tail with index $-\alpha$.
\end{thm}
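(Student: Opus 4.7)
My plan is to reduce to the independent setting via Theorem~\ref{almost:independent} and then adapt the oscillating-tail counterexample of \cite{jacobsen:2009}. Set $G_{\theta}(dv) = (1 + \theta d_{1}\phi_{2}(v))G(dv)$. Hypothesis~(iv) makes $G_{\theta}$ a measure of total mass one, and letting $x \to \infty$ in~(iii) gives $1 + \theta d_{1}\phi_{2}(v) \geq 0$, so $G_{\theta}$ is a probability measure on $(0, \infty)$; the vanishing Mellin condition \eqref{vanishing Mellin} reads $\int_{0}^{\infty} v^{\alpha + i\beta_{0}}G_{\theta}(dv) = 0$, and in particular $\beta_{0} \neq 0$ since $\int v^{\alpha}G_{\theta}(dv) > 0$.

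Next I would construct $F$, the marginal of $X$, with an oscillating tail. Fix $c_{2} \in (0, 1)$ small enough that $c_{2}(\alpha + |\beta_{0}|) \leq \alpha$, pick $x_{0} > 0$ large, $c_{1} > 0$ with $c_{1}(1+c_{2})x_{0}^{-\alpha} < 1$, and a phase $\psi \in \mathbb{R}$, and set
\[
\overline{F}(x) = c_{1}x^{-\alpha}\bigl(1 + c_{2}\cos(\beta_{0}\log x + \psi)\bigr), \qquad x \geq x_{0}.
\]
A derivative check shows $\overline{F}$ is strictly decreasing on $[x_{0}, \infty)$. On $(0, x_{0}]$ I distribute the remaining mass $1 - \overline{F}(x_{0})$ as a two-point mixture at $y_{1}, y_{2}$ with $\phi_{1}(y_{1}) < 0 < \phi_{1}(y_{2})$, which exist by hypothesis~(i) (enlarging $x_{0}$ if necessary). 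For $x_{0}$ large, $|\int_{x_{0}}^{\infty}\phi_{1}\,dF|$ is small compared with the range of achievable means in the two-point mixture, so the two weights can be chosen to enforce $E[\phi_{1}(X)] = 0$. Together with (iii) and (iv), the law $(1 + \theta\phi_{1}(x)\phi_{2}(y))F(dx)G(dy)$ is then a bona fide Sarmanov joint distribution for $(X, Y)$ with the prescribed kernels and parameter.

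I would then verify the hypotheses of Theorem~\ref{almost:independent}. The bounds $c_{1}(1-c_{2})x^{-\alpha} \leq \overline{F}(x) \leq c_{1}(1+c_{2})x^{-\alpha}$ together with Remark~\ref{reg var bounded} give $F \in \mathcal{D}$; Markov's inequality with $\int v^{\alpha+\epsilon}G(dv) < \infty$ gives $\overline{G}(x) = O(x^{-(\alpha+\epsilon)}) = o(\overline{F}(x))$; and exactly as in the proof of Theorem~\ref{inverse product} these upgrade to $H^{*} \in \mathcal{D}$ and $\overline{G}(x) = o(\overline{H^{*}}(x))$. Theorem~\ref{almost:independent} then yields $P[XY > x] \sim \int_{0}^{\infty}\overline{F}(x/v)G_{\theta}(dv)$. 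The tail piece $v > x/x_{0}$ of this integral is bounded by $P[Y_{\theta}^{*} > x/x_{0}] = O(x^{-(\alpha+\epsilon)}) = o(x^{-\alpha})$, using that $\int v^{\alpha+\epsilon}G_{\theta}(dv) < \infty$ by boundedness of $\phi_{2}$. On $v \leq x/x_{0}$ the identity $\cos(A - B) = \cos A\cos B + \sin A\sin B$ with $A = \beta_{0}\log x + \psi$, $B = \beta_{0}\log v$ gives
\[
\overline{F}(x/v) = c_{1}v^{\alpha}x^{-\alpha}\bigl[1 + c_{2}\cos A\cos(\beta_{0}\log v) + c_{2}\sin A\sin(\beta_{0}\log v)\bigr],
\]
and integrating against $G_{\theta}$ on all of $(0, \infty)$ (the extension costs only $o(x^{-\alpha})$) the bracketed cosine and sine pieces collapse into the real and imaginary parts of $\int v^{\alpha + i\beta_{0}}G_{\theta}(dv) = 0$. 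What remains is $c_{1}x^{-\alpha}\int_{0}^{\infty}v^{\alpha}G_{\theta}(dv)$, so $P[XY > x] \sim Cx^{-\alpha}$ with $C > 0$, and $XY \in RV_{-\alpha}$.

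Finally $F \notin RV_{-\alpha}$: for any $y > 0$ with $\beta_{0}\log y \notin 2\pi\mathbb{Z}$, $\overline{F}(xy)/\overline{F}(x)$ has several distinct limit points as $x \to \infty$, determined by the winding of $\beta_{0}\log x$ modulo $2\pi$. The main obstacle I foresee is the joint bookkeeping needed to make $\overline{F}$ monotone, $F$ a probability, and $E[\phi_{1}(X)] = 0$ all at once without damping out the tail oscillation; once these are in place, the remaining argument is the standard Mellin-transform cancellation in the spirit of \cite{jacobsen:2009}.
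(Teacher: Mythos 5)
Your argument is correct, and it follows the same general strategy as the paper (adapt the oscillating-tail counterexample of Jacobsen et al.), but the implementation is genuinely different and arguably cleaner. The paper starts from the Pareto $F_{\alpha}$, perturbs its \emph{density} by $g(x) = 1 + a\cos(\beta_0\log x) + b\sin(\beta_0\log x)$, then invokes Jacobsen et al.'s Theorems~2.1 and~2.3 to get $\overline{\widetilde F\circledast G_\theta}\sim\overline{F_\alpha\circledast G_\theta}\in RV_{-\alpha}$ and non-regular-variation of $\widetilde F$, and only afterwards truncates ($F^{(1)}$) and applies a three-case atomic correction to kill $\int\phi_1\,dF$. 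You instead prescribe the \emph{tail} $\overline F$ directly as $c_1x^{-\alpha}(1+c_2\cos(\beta_0\log x + \psi))$, build the two-point mass on $(0,x_0]$ into the construction from the start (avoiding the three-case analysis), and compute the Breiman-type integral $\int\overline F(x/v)\,G_\theta(dv)$ from scratch, letting the $\cos$/$\sin$ pieces cancel against the real and imaginary parts of $\int v^{\alpha+i\beta_0}G_\theta(dv)=0$. You also make explicit a step the paper leaves implicit: the passage from $P[XY>x]$ to $P[X^*Y_\theta^*>x]$ via Theorem~\ref{almost:independent}, together with the verification of its hypotheses ($F\in\mathcal D$ from the two-sided Pareto sandwich and Remark~\ref{reg var bounded}, $\overline G=o(\overline F)$ from Markov's inequality, and the upgrade to $H^*\in\mathcal D$, $\overline G=o(\overline{H^*})$ as in Theorem~\ref{inverse product}). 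What you gain is self-containment and transparency; what the paper gains is re-use of Jacobsen et al.'s results verbatim, at the cost of the longer $F_\alpha\to\widetilde F\to F^{(1)}\to F$ chain of tail-equivalence comparisons. Minor points worth spelling out if you write this up: the observation that $\int v^{\alpha-i\beta_0}G_\theta(dv)=\overline{\int v^{\alpha+i\beta_0}G_\theta(dv)}=0$ because $G_\theta$ is a real measure (this is what makes your $\cos(A-B)$ expansion vanish even though \eqref{vanishing Mellin} is stated at $+i\beta_0$), and the explicit interval check that $-\bigl(\int_{x_0}^\infty\phi_1\,dF\bigr)/(1-\overline F(x_0))$ lies in $(\phi_1(y_1),\phi_1(y_2))$ for $x_0$ large, which is what guarantees the two weights are nonnegative.
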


\begin{proof}
Find $Y$ such that its marginal is $G$, then consider its twisted version $Y_{\theta}^{*}$ as defined in \eqref{twisted version}, with marginal $G_{\theta}$ given by $G_{\theta}(dy) = (1 + \theta d_{1} \phi_{2}(y)) G(dy)$. From the condition of \eqref{vanishing Mellin}, we find that $$E[{Y_{\theta}^{*}}^{\alpha + i \beta_{0}}] = 0.$$ Since $\phi_{2}$ is bounded, let $|\phi_{2}(y)| \leq b_{2}$ for all $y > 0$, for some finite $b_{2}$. Then
\begin{equation} \label{moment condition counterexample}
E[{Y_{\theta}^{*}}^{\alpha + \epsilon}] \leq (1 + |\theta| |d_{1}| b_{2}) \int_{0}^{\infty} y^{\alpha + \epsilon} G(dy) < \infty.
\end{equation}
This shows that the moment condition of \eqref{moment condition}, $$\int_{0}^{\infty} y^{\alpha - \epsilon} \vee y^{\alpha + \epsilon} G_{\theta}(dy) < \infty,$$ holds. We adopt the idea of Theorem 2.1 of \cite{jacobsen:2009} to define two distribution functions $F_{\alpha}$ and $\widetilde{F}$ as follows:
$$F_{\alpha}(dx) = \alpha x^{-(\alpha + 1)}dx, \quad \text{for all } x > 1,$$
which means $F \in RV_{-\alpha}$, and 
$$\widetilde{F}(dx) = g(x) F_{\alpha}(dx), \quad x > 1,$$
where $g(x) = 1 + a \cos \left(\beta_{0} \log x\right) + b \sin \left(\beta_{0} \log x\right)$ for some constants $a > 0, b > 0$ with $0 < a + b \leq 1$. Then again, from the conclusion of Theorem 2.1 of \cite{jacobsen:2009}, we have $\overline{\widetilde{F} \circledast G_{\theta}} \sim \overline{F_{\alpha} \circledast G_{\theta}} \in RV_{-\alpha}$. But from Theorem 2.3 of \cite{jacobsen:2009}, we know that $\widetilde{F}$ does not have a regularly varying tail.
\par We now have to tweak $\widetilde{F}$ to get our desired $F$, so that all conditions of Theorem \ref{counterexample} hold. Choosing $c > 1$ so that $\overline{\widetilde{F}}(c) < 1$, we define a new distribution $F^{(1)}$ as follows:
\[\overline{F^{(1)}}(y)=\left\{\begin{array}{ll}
\overline{\widetilde{F}}(y) & \mbox{for $y > c$,}\\
\overline{\widetilde{F}}(c) & \mbox{for $1 < y \leq c$,}\\
1 &\mbox{for $y \leq 1$.}
\end{array}\right.\]
We shall now show that $\overline{F^{(1)} \circledast G_{\theta}} \sim \overline{\widetilde{F} \circledast G_{\theta}}$. Observe that
\begin{align} \label{split sum counterexample}
\overline{F^{(1)} \circledast G_{\theta}}(x) =& \int_{(0, x/c)} + \int_{[x/c, x)} + \int_{[x, \infty)} \overline{F^{(1)}}\left(\frac{x}{u}\right) G_{\theta}(du) \nonumber\\
=& \overline{\widetilde{F} \circledast G_{\theta}}(x) - \int_{x/c}^{\infty} \overline{\widetilde{F}}\left(\frac{x}{u}\right) G_{\theta}(du)  + \overline{\widetilde{F}}(c) G_{\theta}([x/c, x)) + G_{\theta}([x, \infty)). 
\end{align} 
We deal with the second term of the sum in \eqref{split sum counterexample} first. Let $\|G_{\theta}\|_{\alpha}$ denote the intergal $\int_{0}^{\infty} y^{\alpha} G_{\theta}(dy)$. From the definitions of $F_{\alpha}$ and $\widetilde{F}$, and the fact that $\overline{\widetilde{F} \circledast G_{\theta}} \sim \overline{F_{\alpha} \circledast G_{\theta}}$, we get:
\begin{align} \label{counterexample second term}
\lim_{x \rightarrow \infty} \frac{\int_{x/c}^{\infty} \overline{\widetilde{F}}\left(\frac{x}{u}\right) G_{\theta}(du)}{\overline{\widetilde{F} \circledast G_{\theta}}(x)} = & \lim_{x \rightarrow \infty} \frac{\int_{x/c}^{\infty} \overline{\widetilde{F}}\left(\frac{x}{u}\right) G_{\theta}(du)}{\overline{F_{\alpha} \circledast G_{\theta}}(x)} \nonumber\\
=& \lim_{x \rightarrow \infty} \frac{\int_{x/c}^{\infty} \int_{x/u}^{\infty} g(y) F_{\alpha}(dy) G_{\theta}(du)}{x^{-\alpha} \|G_{\theta}\|_{\alpha}} \nonumber\\
\leq & \lim_{x \rightarrow \infty} \frac{ (1 + a + b) \int_{x/c}^{\infty} \overline{F_{\alpha}}\left(\frac{x}{u}\right) G_{\theta}(du)}{x^{-\alpha} \|G_{\theta}\|_{\alpha}} \nonumber\\
%=& \lim_{x \rightarrow \infty} \frac{(1 + a + b)x^{-\alpha} \int_{x/c}^{\infty} u^{\alpha} G_{\theta}(du)}{x^{-\alpha} \|G_{\theta}\|_{\alpha}} %\nonumber\\
=& \lim_{x \rightarrow \infty} \frac{(1 + a + b) \int_{x/c}^{\infty} u^{\alpha} G_{\theta}(du)}{\|G_{\theta}\|_{\alpha}}. 
\end{align}
From \eqref{moment condition counterexample} we conclude that $\int_{0}^{\infty} y^{\alpha} G_{\theta}(dy) < \infty$, so that applying dominated convergence, the numerator on the right side of \eqref{counterexample second term} goes to $0$ as $x \rightarrow \infty$. We now consider the last two summands on the right side of \eqref{split sum counterexample}.

\begin{align}
\lim_{x \rightarrow \infty} \frac{\overline{\widetilde{F}}(c) G_{\theta}([x/c, x)) + G_{\theta}([x, \infty))}{\overline{\widetilde{F} \circledast G_{\theta}}(x)} \leq & \lim_{x \rightarrow \infty} \frac{\overline{\widetilde{F}}(c) \overline{G_{\theta}}(x/c) + \overline{G_{\theta}}(x)}{\overline{F_{\alpha} \circledast G_{\theta}}(x)} \nonumber\\
\leq & \lim_{x \rightarrow \infty} \frac{\overline{\widetilde{F}}(c) \frac{\int_{0}^{\infty} u^{\alpha + \epsilon} G_{\theta}(du)}{(x/c)^{\alpha + \epsilon}} + \frac{\int_{0}^{\infty} u^{\alpha + \epsilon} G_{\theta}(du)}{x^{\alpha + \epsilon}}}{x^{-\alpha} \|G_{\theta}\|_{\alpha}} \nonumber\\
=& \lim_{x \rightarrow \infty} \frac{\left(1 + c^{\alpha + \epsilon} \overline{\widetilde{F}}(c)\right) \int_{0}^{\infty} u^{\alpha + \epsilon} G_{\theta}(du)}{x^{\epsilon} \|G_{\theta}\|_{\alpha}} = 0, \nonumber
\end{align}
due to \eqref{moment condition counterexample}. 
%Ultimately, we have $F^{(1)} \circledast G_{\theta} = \widetilde{F} \circledast G_{\theta}$. Consequently $F^{(1)} \circledast G_{\theta} \in RV_{-\alpha}$. 
\par From the definitions of $F^{(1)}$ and $\widetilde{F}$ in terms of $F_{\alpha}$, and Remark \ref{reg var bounded}, it is immediate that $F^{(1)} \in \mathcal{D}$. As $\widetilde{F}$ and $F^{(1)}$ eventually have the same tail, $F^{(1)}$ cannot be regularly varying. The last step in this proof is to adjust $F^{(1)}$ slightly to get the final desired distribution $F$ so that $\int_{0}^{\infty} \phi_{1}(x) F(dx) = 0$.
\par For this purpose, we define $\hat{\phi_{1}}$ as $\hat{\phi_{1}}(x) = \int_{x}^{\infty} \phi_{1}(x) F^{(1)}(dx), \ x > 0$. Because $\phi_{1}$ is bounded, $\hat{\phi_{1}}$ is continuous on $(1, \infty)$. We now subdivide into three cases:
\begin{enumerate}
\item \label{1} If $\hat{\phi_{1}}$ takes both positive and negative values on $(1, \infty)$, by intermediate value theorem, we find $x_{0} > 1$ such that $\hat{\phi_{1}}(x_{0}) = \int_{x_{0}}^{\infty} \phi_{1}(x) F^{(1)}(dx) = 0$. Then we define $F$ as $\overline{F}(x) = \overline{F^{(1)}}(x)/\overline{F^{(1)}}(x_{0})$ for $x \geq x_{0}$.

\item \label{2} Suppose $\hat{\phi_{1}}$ takes only strictly positive values on $(0, \infty)$. Because $\phi_{1}$ takes both positive and negative values, we find $x_{1} > 0$ and $c_{1} > 0$ such that $\phi_{1}(x_{1}) = -c_{1}$. Let $\hat{\phi_{1}}(1) = \int_{1}^{\infty} \phi_{1}(x) F^{(1)}(dx) = c_{0} > 0$. We define the probability measure $\mu$ as follows:
$$\mu(B) = \frac{\mu^{(1)}\left[B \bigcap (1, \infty)\right] + \frac{c_{0}}{c_{1}} \delta_{x_{1}}(B)}{\mu^{(1)}(1, \infty) + \frac{c_{0}}{c_{1}}}, \quad B \text{ a Borel set on } (0, \infty),$$
where $\mu^{(1)}$ is the law induced by $F^{(1)}$. Then we take $F$ to be the distribution function for $\mu$.

\item \label{3} Suppose $\hat{\phi_{1}}$ takes only strictly negative values on $(0, \infty)$. Again, we can find $x_{2} > 0$ and $c_{2} > 0$ such that $\phi_{1}(x_{1}) = c_{2}$. Let $\hat{\phi_{1}}(1) = \int_{1}^{\infty} \phi_{1}(x) F^{(1)}(dx) = -c_{0} < 0$. We define the probability measure $\mu$ as follows:
$$\mu(B) = \frac{\mu^{(1)}\left[B \bigcap (1, \infty)\right] + \frac{c_{0}}{c_{2}} \delta_{x_{2}}(B)}{\mu^{(1)}(1, \infty) + \frac{c_{0}}{c_{2}}}, \quad B \text{ a Borel set on } (0, \infty).$$
Then we take $F$ to be the distribution function for $\mu$.
\end{enumerate}

\par We claim that for a suitable constant $\kappa$, $F \circledast G_{\theta} \sim \kappa F^{(1)} \circledast G_{\theta}$, which gives $F \circledast G_{\theta} \in RV_{-\alpha}$. This is immediate for (\ref{1}). For (\ref{2}), we consider
\begin{align} \label{last step split sum}
\overline{F \circledast G_{\theta}}(x) =& \frac{1}{\mu^{(1)}(1, \infty) + \frac{c_{0}}{c_{1}}} \left[\int_{0}^{x} \overline{F^{(1)}}\left(\frac{x}{u}\right) G_{\theta}(du) + \overline{F^{(1)}}(1) G_{\theta}([x, \infty)) + \frac{c_{0}}{c_{1}} G_{\theta}\left(\left[\frac{x}{x_{1}}, \infty\right)\right)\right] \nonumber\\
=& \frac{1}{\overline{F^{(1)}}(1) + \frac{c_{0}}{c_{1}}} \left[\overline{F^{(1)} \circledast G_{\theta}}(x) - \int_{x}^{\infty} \overline{F^{(1)}}\left(\frac{x}{u}\right) G_{\theta}(du) + \overline{F^{(1)}}(1) G_{\theta}([x, \infty)) + \frac{c_{0}}{c_{1}} G_{\theta}\left(\left[\frac{x}{x_{1}}, \infty\right)\right)\right].
\end{align}
We now deal with the second term in the sum on the right side of \eqref{last step split sum} the same way as the second term in the sum of \eqref{split sum counterexample}, and the sum of the last two terms in the same way as the last two terms of \eqref{split sum counterexample}. From the definition of $F$ in terms of $F^{(1)}$ and hence $\widetilde{F}$, and because $\widetilde{F}$ is not regularly varying, we conclude that $F$ also not regularly varying. Case (\ref{3}) is dealt with similarly. This completes the proof. 
\end{proof}

\section{Acknowledgement}
The first author kindly acknowledges partial support by the project RARE-318984, a Marie Curie IRSES Fellowship within the 7th European Community Framework Programme. The work forms part of the Dissertation for the Master of Statistics degree of the second author at Indian Statistical Institute, Kolkata, India.

\bibliography{mybibfile2}
\end{document}